\newcommand{\subf}[2]{%
	{\small\begin{tabular}[t]{@{}c@{}}
			#1\\#2
	\end{tabular}}%
}
\numberwithin{equation}{section} \numberwithin{figure}{section}
\theoremstyle{remark} \newtheorem{remark}{Remark}}
\newtheorem{definition}{Definition}[section]
\newtheorem{theorem}{Theorem}[section]
\newtheorem{lemma}{Lemma}[section]
{\theoremstyle{remark} \newtheorem{example}{Example}[section]}
\newcommand\pp{\partial}
\newcommand{\eps}{\varepsilon}
\newcommand{\calA}{\mathcal{A}}
\newcommand{\calC}{\mathcal{C}}
\newcommand{\calI}{\mathcal{I}}
\newcommand{\calL}{\mathcal{L}}
\newcommand{\calK}{\mathcal{K}}
\newcommand{\calD}{\mathcal{D}}
\newcommand{\calN}{\mathcal{N}}
\newcommand{\dD}{d_\calD}
\newcommand{\dN}{d_\calN}
\newcommand{\gD}{g_\calD}
\newcommand{\gN}{g_\calN}
\newcommand{\GD}{G_\calD}
\newcommand{\GN}{G_\calN}
\newcommand{\GammaD}{\Gamma_\calD}
\newcommand{\GammaN}{\Gamma_\calN}
\newcommand{\N}{\mathbb{N}}
\newcommand{\R}{\mathbb{R}}
\newcommand{\Rd}{{\mathbb{R}^d}}
\newcommand{\vA}{{{\bm A}}}
\newcommand{\vb}{{{\bm b}}}
\newcommand{\ve}{{{\bm e}}}
\newcommand{\vm}{{{\bm m}}}
\newcommand{\vn}{{{\bm n}}}
\newcommand{\vq}{{{\bm q}}}
\newcommand{\vt}{{{\bm t}}}
\newcommand{\vy}{{{\bm y}}}
\newcommand{\vD}{{{\bm D}}}
\newcommand{\vbeta}{{{\bm \beta}}}
\newcommand{\vnu}{{{\bm \nu}}}
\newcommand{\vphi}{{{\bm \phi}}}
\newcommand{\vphii}{{{\bm \varphi}}}
\newcommand{\vpsi}{{{\bm \psi}}}
\newcommand{\vTheta}{{{\bm \Theta}}}
\newcommand\restr[2]{{
  \left.\kern-\nulldelimiterspace 
  #1 
  \vphantom{\big|} 
  \right|_{#2} 
  }}
\DeclareMathOperator\Div{div}
\begin{document}

\title[A deep least-squares method]{A deep first-order system least squares method for solving elliptic PDEs}

\author[F.M.~Bersetche]{Francisco M.~Bersetche}
\address[F.M.~Bersetche]{Departamento de Matem\'atica, Universidad de Buenos Aires, Buenos Aires, Argentina}
\email{fbersetche@dm.uba.ar}
\thanks{ FMB has been supported in part by a PEDECIBA postdoctoral fellowship and the ANPCyT grant PICT 2018-3017.}

\author[J.P.~Borthagaray]{Juan Pablo~Borthagaray}
\address[J.P.~Borthagaray]{Centro de Matem\'atica, Universidad de la Rep\'ublica, Montevideo, Uruguay}
\email{jpb@cmat.edu.uy}

\begin{abstract}
We propose a First-Order System Least Squares (FOSLS) method based on deep-learning for numerically solving second-order elliptic PDEs. The method we propose is capable of dealing with either variational and non-variational problems, and because of its meshless nature, it can also deal with problems posed in high-dimensional domains. We prove the $\Gamma$-convergence of the neural network approximation towards the solution of the continuous problem, and extend the convergence proof to some well-known related methods. Finally, we present several numerical examples illustrating the performance of our discretization.
\end{abstract}
 
\maketitle

\section{Introduction} \label{sec:introduction}

Approximate solution of PDEs using machine learning techniques has been considered in various forms in the past thirty years. For instance,  \cite{lagaris1998,lagaris2000,lee1990,malek2006} propose to use neural networks to solve PDEs and
ODEs. These articles compute neural network solutions by using an a priori fixed mesh. In recent years, there has been an incipient development of mesh-free numerical methods to solve PDEs by using neural networks. Although the approaches have been diverse, most of these algorithms aim to train a neural network to approximate the unknown function, forcing the fulfillment of the PDE and its boundary conditions through a suitable loss functional. In this regard, among other works, let us mention \cite{E18, DGM18, He20, Wang20, Zang20, PINN, FNM, liu1, liu2}.

Deep neural networks are not necessarily suitable for solving PDEs in low dimensions, where they may be outperformed by classical methods specifically tailored for the problems under consideration. However, neural network methods have proven to be effective in some circumstances where the application of classical methods becomes impractical. Such is the case of high-dimensional PDEs. We refer to \cite{wojtowytsch2020can, weinan2022some} for discussion about the suitability of shallow neural networks for solving high-dimensional PDEs.
     
The method we propose in this work aims
to overcome some disadvantages of the algorithms available in the literature.
Using a first-order formulation we are able to avoid the computation of second-order derivatives in cost functionals, thereby saving a significant computational cost in high dimensions. Avoiding second-order derivatives also allows us to use linear activation functions and, a priori, gives us the possibility of approximating weak solutions. On the other hand, counting on explicit representations of the gradients simplifies the strong imposition of
 Neumann-type boundary conditions. Namely, we can impose boundary conditions without adding penalty terms in the loss function.  This results in a reduction in training time. First order formulations have also been used in \cite{liu1,liu2}.

Let $\Omega \subset \Rd$ be an open domain. In this work, we shall make use of the spaces
\[ \begin{aligned}
&H^1(\Omega) = \{ v \in  L^2(\Omega) \colon \nabla v \in L^2(\Omega)\}, \\
& H(\Div; \Omega ) = \{ \vpsi \in  [L^2(\Omega)]^d \colon \Div \vpsi \in L^2(\Omega)\}.
\end{aligned}
\]
We assume there exists a disjoint partition $\pp \Omega = \GammaD \cup \GammaN$, with $|\GammaD| > 0$, and let $\vnu$ denote the outward normal to $\Omega$. Given sufficiently regular functions $f, \gD, \gN$, we aim to solve the problem 
\begin{equation}\label{eq:problem}
\left\lbrace
\begin{aligned}
- \mbox{div}(\vA \nabla u) + Bu & = f  &\mbox{ in } \Omega, \\
u & = \gD  &\mbox{ on } \GammaD, \\
\vA \nabla u  \cdot \vnu & = \gN &\mbox{ on } \GammaN, \\
\end{aligned}
\right.
\end{equation}
where we assume $\vA \in [L^{\infty}(\Omega)]^{d\times d}$ is a.e. symmetric and uniformly positive definite: there exist constants $\lambda, \Lambda$ such that
\[
0 < \lambda \le \lambda_{\min}(\vA(x)) \le \lambda_{\max}(\vA(x)) \le \Lambda, \quad \mbox{for a.e. } x \in  \Omega,
\]
where $\lambda_{\min}(\vA(\cdot))$ (resp. $\lambda_{\max}(\vA(\cdot))$) denotes the minimum (resp. maximum) eigenvalue of $\vA(\cdot)$.

We assume the linear operator $B \colon H^1(\Omega) \to L^2(\Omega)$ in \eqref{eq:problem} satisfies
\begin{equation*} \label{eq:hyp-B}
\| B v \|_{L^2(\Omega)} \le C \| \nabla v \|_{L^2(\Omega)}  \quad \forall v \in H^1(\Omega) \mbox{ such that } v = 0 \mbox{ on } \GammaD.
\end{equation*}
Examples satisfying this condition include $Bv = \Div(\vbeta v)$, with $\vbeta \in [W^{1,\infty}(\Omega)]^d$, 
and $Bv = \vbeta \cdot \nabla v + \gamma v$ for some $\vbeta \in [L^{\infty}(\Omega)]^d$, $\gamma \in L^{\infty}(\Omega)$. We thus remark that \eqref{eq:problem} can accommodate, for example, stationary convection-reaction-diffusion problems. Following \cite{Cai94}, we require problem \eqref{eq:problem} to be invertible in $H^1(\Omega)$, namely, that for every $f \in H^{-1}(\Omega)$ there exists a weak solution $u \in H^1(\Omega)$ with $u=0$ on $\GammaD$.


We introduce the flux variable $\vphi = \vA \nabla u$ and rewrite \eqref{eq:problem} as a first-order system:
\begin{equation}\label{eq:FOS}
\left\lbrace
\begin{aligned}
 \vphi - \vA \nabla u & = 0 & \mbox{ in } \Omega, \\
 - \Div \vphi + B u - f & = 0 & \mbox{ in } \Omega, \\
u & = \gD & \mbox{ on } \GammaD, \\
\vphi \cdot \vnu & = \gN & \mbox{ on } \GammaN. \\
\end{aligned} 
\right.
\end{equation}
Our approach is based on seeking minimizers of the loss function 
\begin{equation} \label{eq:LS-loss}
\calL (u, \vphi) := \|  \vphi - \vA \nabla u \|_{L^2(\Omega)}^2 + \| \Div \vphi - B u + f \|_{L^2(\Omega)}^2
\end{equation}
on a suitable set of admissible functions
\[
\calA := \{ \vq = (u, \vphi) \in H^1(\Omega) \times H(\Div; \Omega) \colon u = \gD \mbox{ on } \GammaD, \ \vphi \cdot \vnu = \gN \mbox{ on } \GammaN \}.
\]
Clearly, if \eqref{eq:problem} has a unique solution $u \in H^1(\Omega)$, then the unique minimizer of $\calL$ in $\calA$ is $\vq :=~(u, \vA \nabla u)$.
Our goal is to compute approximations to such a minimizer within a suitable 
space $\calA_m \subset \calA$. Particularly, in our method we consider a space $\calA_m$ composed of neural networks with a fixed architecture and parameters $\vTheta \in \R^m$. Some efforts in this direction include the deep FOSLS method from \cite{Cai20} and the deep mixed residual method proposed in \cite{Lyu20}. 
Reference \cite{Cai20} proposes the use of a partition of $\Omega$ and a mid-point quadrature rule for the evaluation of the discrete loss functional; instead, our algorithm is meshfree and uses random quadrature points. 
In more recent work by three of the authors of that work \cite{liu1}, the use of Monte Carlo integration is discussed albeit not pursued in detail.
Such an approach yields a significant advantage in high-dimensional problems. Our method can be understood in the setting of the mixed residual methods in \cite{Lyu20}. However, a significant difference between our work and \cite{Lyu20} is that here we propose a strong imposition of the boundary conditions instead of the inclusion of penalization terms in the loss functional. We pre-train neural networks to accommodate boundary data, which results in a reduction in the number of iterations required in the solution of the PDE \cite{Berg18, Sheng20}.

The error in the approximation of continuous functionals with their discrete counterparts is usually not taken into account in numerical methods based on neural networks available in the literature; some recent efforts in this direction include \cite{shin2020,hong2021priori,zerbinati2022pinns}, where convergence rates are proved for a certain class of elliptic functionals under strong regularity conditions on the solution of the continuous problem.
In other words, the focus is generally on the convergence of the minimizers of functionals such as $\calL$ in \eqref{eq:LS-loss} over certain neural network spaces towards the minimizer of the same functional at the continuous level. However, in practice one does not compute $\calL$ exactly but rather approximates it by means of quadrature rules. Let us call $\calL_N$ such an approximation to the functional $\calL$, where $N$ is, for example, the number of quadrature points. The computation of $ \calL_N $ instead of $ \calL $ can introduce important changes in the nature of the minimization problem, such as the loss of convexity of the associated functional \cite{E18}. A major contribution of this work is to present a convergence analysis that considers the discretization of the functional $ \calL $. Specifically, we prove the almost-sure $\Gamma$-convergence of the discrete loss functions towards the continuous one. As stated in Theorem \ref{teo:conv}, this implies the almost-sure convergence of the solutions computed numerically to the solution of the continuous problem.

The techniques we develop for this purpose are not only valid for the method we propose, and we generalize and apply them to the convergence analysis of a broad class of methods, including the Deep Ritz \cite{E18} and the Deep Galerkin \cite{DGM18} Methods (DRM and DGM, respectively; see Remarks \ref{rem:DRM} and \ref{rem:DGM}).

\subsection*{Organization of the paper.} The rest of the paper is organized as follows.
Section \ref{sec:description} describes the method we propose for dealing with \eqref{eq:FOS}, including the treatment of Dirichlet and Neumann boundary conditions in strong form, and discusses some aspects pertaining to its implementation. We perform a convergence analysis for our method in Section \ref{sec:analysis}. This analysis takes into account the approximation of the loss functional by means of Monte Carlo integration, and establishes the convergence of the discrete minimization problem towards the continuous one in the sense of almost sure $\Gamma$-convergence. Section \ref{sec:general} generalizes the analysis to include some other well-known methods, thereby establishing their convergence as well. We illustrate the performance of our method through computational examples in Section \ref{sec:numerical}, and provide some concluding remarks in Section \ref{sec:conclusion}.

\section{Description of the method} \label{sec:description}

The goal of the method we propose is to approximate the unique minimizer $(u, \vphi)$ of the functional in \eqref{eq:LS-loss}. A natural first approach would consist in seeking a set of parameters $\vTheta_0 \in \R^m$ such that
\[
\calL (u_{ \vTheta_0}, \vphi_{ \vTheta_0 }) = \min_{\vTheta \in \R^m} \calL (u_{\vTheta}, \vphi_{\vTheta}),
\]
with the functions $(u_{\vTheta}, \vphi_{\vTheta})$ belonging to a suitable neural network space. The use of neural networks in this setting has the advantage that one can easily implement meshfree methods by randomly sampling collocation points (see \cite{DGM18,he2022relu,siegel2020high,siegel2021sharp}, for example), and thereby be able to deal with high-dimensional problems, where most classical numerical PDE methods become unfeasible.

The enforcement of boundary conditions is a non-trivial aspect to take into account in this approach. A typical way to tackle this issue is to incorporate boundary conditions by adding a penalization term \cite{ E18, DGM18, Zang20}. However, in practice it is observed that enforcing discrete functions to satisfy the boundary conditions gives rise to a faster training process \cite{Berg18,Sheng20}.
We shall first create suitable auxiliary functions with the purpose of imposing the boundary conditions in a strong fashion. In this way, we ensure ($u_{\vTheta},\vphi_{\vTheta}) \in \calA_m \subset \calA$, for all $\vTheta \in \R^m$. Then, the optimization procedure consists of sampling $N$  points $\{x_k \}_{k=1}^N \subset \Omega$ uniformly, and approximating
$\calL (u_{ \vTheta}, \vphi_{ \vTheta}) \approx \calL_N (u_{ \vTheta}, \vphi_{ \vTheta}),$ 
at every step of a gradient descent algorithm, with $\calL_N$ defined as
\begin{equation}\label{eq:discrete_cost}
	\calL_N (u, \vphi) :=  \frac{|\Omega|}{N} 
	\sum_{k=1}^N  \big( \vphi(x_k) - \vA \nabla u(x_k) \big)^2 + \big(\Div\vphi(x_k) - B u(x_k) + f(x_k) \big)^2.
\end{equation}        
We expose the details below.

\subsection{Strong imposition of boundary conditions}\label{subsec:strong_b}
We follow the ideas from \cite{Berg18} about the imposition of Dirichlet boundary conditions, and extend the approach to include Neumann boundary conditions. Instead of trying to compute either $u$ or $\vphi$ directly and incorporate the boundary conditions by a penalization term, we shall enforce them in the construction of the neural network approximations. For that purpose, we make use of the following notion.

\begin{definition}[smooth distance function] \label{def:distance-function}
Let $\Gamma_* \subset \overline \Omega$ be a closed set. We say that a Lipschitz continuous function $d_* \colon \Omega \to \R$ is a {\em smooth distance function} if it satisfies $d_* \ge 0$ and $d_* (x) = 0$ if and only if $x \in \Gamma_*$.
\end{definition}

We briefly comment on the use of smooth distance functions in the strong imposition of Dirichlet and Neumann boundary conditions. In the computation of $u$ in \eqref{eq:LS-loss}, we restrict the class of functions to be
\begin{equation} \label{eq:def-u}
u (x) := \GD(x) + \dD(x) \, v(x),
\end{equation}
where the unknown is the function $v \colon \Omega \to \R$, $\GD$ is a lifting of the Dirichlet datum, and $\dD$ is a smooth distance function to $\GammaD$.

In a similar fashion, we can incorporate normal boundary conditions on the flux variable $\vphi$ in a strong way. We first construct a vector field $\vn \colon \Omega \to \R^d$ such that $\vn |_{\GammaN} = \vnu$ and $| \vn (x) | = 1$ for a.e. $x \in \Omega$, and consider
\begin{equation} \label{eq:def-vphi}
\vphi (x) := \vpsi(x) + \left(\GN (x) - \frac{  \vpsi(x) \cdot \vn(x) }{1 + \dN(x)} \right) \vn(x).
\end{equation}
Above, $\GN$ is a lifting of the Neumann boundary condition, $\dN$ is a smooth distance function to $\GammaN$, and the unknown is the function $\vpsi \colon \Omega \to \R^{d}$. By its definition, the function $\vphi$ satisfies the boundary condition $\vphi \cdot \vnu = \gN$ at $\GammaN$. We remark that we do not require any smoothness on $\vn$: in particular this field may be discontinuous at some points in the domain.

Therefore, in the construction of approximate solutions we shall first compute the vector field $\vn$ and the scalar functions $\dD$, $\dN$, $\GD$, $\GN$. Then, we seek $\vy = (v, \vpsi)$ such that the corresponding pair $(u,\vphi)$, given by \eqref{eq:def-u} and \eqref{eq:def-vphi}, minimizes the loss function $\calL$. The computation of the auxiliary functions $\vn$, $\dD$, $\dN$, $\GD$, $\GN$ typically requires fewer degrees of freedom and iterations than the computation of $(v, \vpsi)$, depending on the complexity of the domain or the boundary data. Consequently, we shall frequently use a simpler architecture to represent them. Below, we give details on the computation of the auxiliary functions.

\subsubsection{Computation of smooth distance functions}
Loosely, for $* \in \{\calD,\calN\}$, a smooth distance function to $\Gamma_*$ is a function $d_* \colon \Omega \to [0,\infty)$ that approximates the distance to $\Gamma_*$, cf. Definition \ref{def:distance-function}. To construct such functions, we first randomly choose $N_d$ points $\{x_{i}\}_{i=1}^{N_d} \subset \Omega$ (the same set of points can be used for either $* = \calD$ and $* = \calN$) and compute
\[
d^{(*)}(x_i) \approx \mbox{dist} (x_i, \Gamma_*).
\]
This can be done by choosing points on $\Gamma_*$ and using efficient nearest-neighbor search strategies. Once we have computed the quantities $\{d^{(*)}(x_i)\}_{i=1}^{N_d}$, we train a neural network for $d_*$ by using the cost function 
\[
\calL_{*}(d) = \frac{1}{N_d} \sum_{i=1}^N |d(x_i) - d^{(*)}(x_i)|^2 + \frac{1}{N_{d,*}} \sum_{i=1}^{N_{d,*}} |d(x_{*,i})|^2,
\]
where $\{x_{*,i}\}_{i=1}^{N_{d,*}}$ is a random batch of points on $\Gamma_*$. 

In the setting of $\dD$ and $\dN$, we use neural networks with a single hidden layer and significantly less parameters than the networks employed in the PDE resolution.

\subsubsection{Boundary data liftings and normal field} \label{sec:boundary-data}
We approximate liftings of the boundary data to $\Omega$ by {\em smooth liftings} \cite{Berg18}: in either \eqref{eq:def-u} and \eqref{eq:def-vphi}, we require $\GD$ and $\GN$ to coincide with $\gD$ on $\GammaD$ and with $\gN$ on $\GammaN$, respectively, and to be smooth enough so that we can apply the differential operator to them pointwise. A natural way to enforce the former is to set the $L^2$-norms of the discrepancies on the corresponding boundary subsets as loss functions,
namely
\[
\calL_\calD (G) = \| G - \gD \|_{L^2(\GammaD)}^2 , \quad \calL_\calN (G) = \| G - \gN \|_{L^2(\GammaN)}^2.
\]

In practice, we consider sets of boundary nodes $\{ z^\calD_i \}_{i=1}^{M_\calD} \subset \GammaD,$ $\{ z^\calN_i \}_{i=1}^{M_\calN} \subset \GammaN$ and define the quadratic cost functionals
\[
\calL_\calD (G) = \frac1{M_\calD}\sum_{i=1}^{M_\calD} | G(z^\calD_i) - \gD(z^\calD_i) |^2,
\qquad
\calL_\calN (G) = \frac1{M_\calN}\sum_{i=1}^{M_\calN} | G(z^\calN_i) - \gN(z^\calN_i) |^2.
\]
In the same fashion as for the smooth distance functions, we consider neural networks with a single hidden layer to compute the functions $\dD$ and $\dN$.

Analogously, for the computation of the vector field $\vn$ we start from the loss function
\[
\calL_{\vn} (\vm) = \| \vm - \vnu \|_{L^2(\GammaN)}^2 + \| |\vm|^2 - 1 \|_{L^2(\Omega)}^2 ,
\]
consider a set of randomly selected points $\{z^{\calN, \vn}_{i}\}_{i=1}^{M_{\calN, \vn}} \subset \GammaN$ and $\{z^{\vn}_i\}_{i=1}^{M_{\vn}} \subset \Omega$, and minimize the cost functional 
\[
\calL_{\vn} (\vm) = \frac{1}{M_{\calN, \vn}} \sum_{i=1}^{M_{\calN, \vn}} | \vm(z^{\calN, \vn}_{i}) - \vnu |^2 + \frac1{M_{\vn}}\sum_{i=1}^{M_{\vn}} | |\vm(z^{\vn}_i)|^2 - 1 |^2.
\]
We point out that, in practice, the set of auxiliary points $\{z^{\calN, \vn}_{i}\}_{i=1}^{M_{\calN, \vn}}$ can be the same as the set $\{ z^\calN_i \}_{i=1}^{M_\calN}$ used in the approximation of $\calL_\calN$.

\subsection{Computational aspects}
Once we have built the auxiliary functions, we proceed to compute $u$ and $\vphi$. For this purpose, we consider a set of random points $\{x_k\}_{k=1}^N \subset \Omega$, and seek to minimize the cost functional 
\begin{equation}\label{eq:loss_discrete}
	\calL_N (u, \vphi) :=  \frac{|\Omega|}{N} 
	\sum_{k=1}^N  \big( \vphi(x_k) - \vA \nabla u(x_k) \big)^2 + \big(\Div\vphi(x_k) - B u(x_k) + f(x_k) \big)^2.
	\end{equation}
From the construction of $u$ and $\vphi$ (see \eqref{eq:def-u} and \eqref{eq:def-vphi}), the trainable parameters $ \vTheta $ arise in the computation of the auxiliary functions $v$ and $\vpsi$.

In broad terms, the method we propose can be summarized as follows: 
\begin{itemize}
 \item {\bf Stage 1:} Train auxiliary functions $\dD$, $\dN$, $\GD$, $\GN$, and $\vn$.
\item {\bf Stage 2:} Until some stop criterion is reached, do:
\begin{itemize}
	\item Select random points $\{x_k\}_{k=1}^N \subset \Omega$.
	\item For some learning rate $\ell$, do: $$\vTheta = \vTheta - \ell \nabla_{\vTheta} \calL_N (u_\vTheta, \vphi_\vTheta).$$
	\item Update learning rate.
\end{itemize}
\end{itemize}

The computation of $\calL_N (u, \vphi)$ requires computing the derivatives of $u$ and $\vphi$ with respect to the input variables, evaluated at $\{x_k\}_{k=1}^N$. Since we constructed our auxiliary functions as neural networks, it is possible to compute efficiently these derivatives by means of the Back-Propagation algorithm. Packages like TensorFlow allow this kind of computation.
  
Additionally, our least-squares loss function \eqref{eq:loss_discrete} only involves first-order derivatives in space. We discretize such derivatives by using finite-difference quotients. 
Namely, for any function $\vphii \colon \Rd \to \R^n$ we let $h > 0$ be a fixed constant and consider the second-order (with respect to $h$) formula
\[
\partial_i \vphii (x_k) \simeq \frac{\vphii  (x_k + h\ve_i) - \vphii (x_k - h\ve_i)}{2h},
\]
where $\ve_i \in \R^d$ is the $i$-th canonical basis vector in $\Rd$. We employ this formula for the approximation of $\nabla u$, $\Div \vphi$ and the first-order derivatives involved in $B$.

For the numerical examples we implemented our algorithm by using PyTorch and discretizing the differential operators by means of finite differences. We typically use about 10,000 steps of gradient descent, sampling between 1,000 and 5,000 random points in $\Omega$ at each step. A step-type decrease in the learning rate showed good results in practice. In particular, we start from a learning rate $\ell = 10 ^{-2}$, which we 
halve every 1,000-2,500 gradient descent steps.
No particular type of architecture was chosen for the functions involved. We use three-layer neural networks with linear activation function (ReLU) for the auxiliary functions, and five-layer networks for the main variables $v$ and $\vpsi$. The ADAM \cite{ADAM} optimization algorithm showed good results in numerical experiments. Further details about the implementation of the method can be found in Section \ref{sec:numerical}.

Regarding the training of auxiliary functions $\dD$ and $\dN$, the following procedure showed good results in practice: 
\begin{itemize}
    \item Select $N_d$ random points  $\{x_k\}_{k=1}^{N_d} \subset \Omega$.
    
    \item Initialize a vector $\vD$ as $\vD_i = \infty$ for $i = 1,...,N_d$.
	
	\item Until some stop criterion is reached, do:
	
	\begin{itemize}
		\item Select $M_{*}$ random points $\{z^*_i\}_{i=1}^{M_*} \subset \Gamma_*$.
		\item Update $\vD$ as: $\vD_k =\min\{ \min_{i= 1, \ldots M_{*}}{|x_k - z^*_i|} , \vD_k\}$
		\item Define the loss function: 
		$$\calL_{*}(d_*) = \frac{1}{N_d} \sum_{k=1}^{N_d} |d_*(x_k) - \vD_k|^2 + \frac{1}{M_*} \sum_{i=1}^{M_{*}} |d(z_i^*)|^2.$$
		\item For some learning rate $\ell$, do: $$\vTheta_{d_*} = \vTheta_{d_*} - \ell \nabla_{\vTheta_{d_*}} \calL_* (d_*).$$
		\item Update learning rate.
	\end{itemize}
Here $* \in \{D,N\}$, and $\vTheta_{d_*}$ denotes the trainable parameters of $d_*$.
\end{itemize}
 
\section{Analysis of the method}  \label{sec:analysis}

In this section, we prove the convergence of our method by using two main ingredients. First, we put the discretization in a $\Gamma$-convergence framework. More precisely, the sequence of functionals we consider is related to the use of meshfree methods in the computation of a regularized version of the discrete loss functional $\R^m \ni \vTheta \mapsto \calL (u_\vTheta, \vphi_\vTheta)$; see Theorem \ref{teo:gamma_conv} below. Second, we exploit the coercivity of the least-squares functional and approximation properties of neural networks to conclude that the sequence of minimizers of the regularized discrete loss functionals converges to the solution of \eqref{eq:problem} as the number of neural network parameters $m \to \infty$.

For the sake of simplicity, we consider problem \eqref{eq:FOS} with $\gD =  \gN = 0$. Otherwise, one could consider $\GD$ and $\GN$ such that $\GD = \gD$ on $\GammaD$ and $\GN = \gN$ on $\GammaN$, a smooth normal field $\vn$ such that $\vn = \vnu$ on $\GammaN$, and then the auxiliary functions $u_0 = u - \GD$ and $\vphi_0 = \vphi - \GN \vn$ would solve the first-order system
\[
\left\lbrace
\begin{array}{rll}
 \vphi_0 - \vA \nabla u_0 = &  \vA \nabla \GD - \GN \vn  & \mbox{in } \Omega, \\
 - \Div(\vphi_0) + B u_0 = & f + \Div(\GN \vn) - B \GD  & \mbox{in } \Omega, \\
u_0 = & 0 & \mbox{on } \GammaD, \\
\vphi_0 \cdot \vnu = & 0 & \mbox{on } \GammaN. \\
\end{array}
\right.
\]
Naturally, the solution to this system corresponds to the minimum of the least-squares functional
\[
(u, \vphi) \mapsto \|  \vphi - \vA \nabla u + \widetilde{g} \|_{L^2(\Omega)}^2 + \| \Div(\vphi) - B u + \widetilde{f} \|_{L^2(\Omega)}^2,
\]
with $\widetilde{g} = - \vA \nabla \GD + \GN \vn $ and $\widetilde{f} = f + \Div(\GN \vn) - B \GD$. This functional can be dealt with by using the same tools as for \eqref{eq:LS-loss}, the only difference being the presence of the zero-order correction term $\widetilde{g}$ in the first $L^2$-norm.

In the following proof of convergence, we restrict ourselves to one hidden layer neural networks with $n$ neurons. We define the set of discrete functions
\begin{equation*}
	\calC_m := \Big\{ (v_\vTheta,\vpsi_\vTheta) : v_\vTheta = B_v \sigma( A_v x + c_v ), \vpsi_\vTheta = B_{\vpsi} \sigma( A_{\vpsi} x + c_{\vpsi} ) \Big\},
\end{equation*}
with $A_v,A_{\vpsi} \in \R^{n \times d}$, $c_v,c_{\vpsi} \in \R^{n \times 1}$, $B_v \in \R^{1\times n}$, $B_{\vpsi} \in \R^{d \times n }$, and $\sigma \colon \R^n \to \R^n$, and $\sigma$ a smooth and bounded non-constant activation function, applied elementwise. We collect all the parameters in $\vTheta \in \R^m$ with $m = 3n(d+1)$.
We remark that, whenever we state that $m \to \infty$, we mean that the number of neurons $n$ is growing to infinity.

Assuming that we are able to construct smooth auxiliary functions $\dD$, $\dN$ and $\vn$ as in Section \ref{subsec:strong_b}, we define the set of discrete admissible functions
\begin{equation} \label{eq:admissible-class}
	\calA_m := \Big\{ \vq_{\vTheta}= (u_{\vTheta},\vphi_{\vTheta}) : u_{\vTheta} = \dD v_{\vTheta} \mbox{ and }		\vphi_{\vTheta} = \vpsi_{\vTheta} - \Big( \frac{  \vpsi_{\vTheta} \cdot \vn }{1 + \dN} \Big) \vn, \mbox{ }(v_{\vTheta},\vpsi_{\vTheta})\in \calC_m \Big\}.
\end{equation}
We remark that the fulfillment of the boundary conditions is guaranteed within the set $\calA_m$, in the sense that $u_\vTheta = 0$ if $\dD = 0$ and $\vphi_\vTheta \cdot \vn = 0$ if $\dN =0$.

\begin{remark}
Naturally, when using Montecarlo integration, one is not allowed to take pointwise evaluations of an arbitrary function $f \in L^2(\omega)$. By density, for every $\epsilon > 0$ we can find a continuous function $f_\epsilon$ with $\| f - f_\epsilon \|_{L^2(\Omega)} < \epsilon$. By the ellipticity of the functional $\calL$ in the $H^1(\Omega) \times H(\Div; \Omega)$ norm (cf. \eqref{eq:funcional_eliptico} below), if we let $\calL_\epsilon$ be the functional \eqref{eq:LS-loss} using $f_\epsilon$ instead of $f$ and $\vq_{0,\epsilon}$ its minimizer, we then have $\| \vq_0 - \vq_{0,\epsilon} \|_{H^1(\Omega) \times H(\Div; \Omega)} < \epsilon$. We can therefore implement the method by using $f_\epsilon$ instead of $f$ and letting $\epsilon \to 0$ as $m \to \infty$.

Nevertheless, we emphasize that, for the sake of the theoretical results in this paper, for any $f \in L^2(\Omega)$ we can take any representative of the equivalence class of $f$ in the definition of the functionals (e.g. in \eqref{eq:reg_func_dis_gen}). Our convergence results are not affected because they are stated in an ``almost sure" sense.     
\end{remark}

\subsection{Approximation properties of neural networks}

Let $\vq_0 = (u_0,\vphi_0) \in \calA$ be the unique minimizer of \eqref{eq:LS-loss}.
We shall make the assumption that $\vq_0$ can be approximated by the neural network spaces. Namely, let us assume that 
\begin{equation} \label{eq:hypothesis}
d(\vq_0,\calA_m) := \inf_{\vq_\vTheta \in \calA_m} \|\vq_0-\vq_\vTheta\|_{H^1(\Omega) \times H(\Div; \Omega)} \to 0 \quad \mbox{as } m\to\infty.
\end{equation}

We briefly comment on this hypothesis. In first place, there are several by now classical results \cite{Cybenko89, hornik1991, Barron93} regarding the approximation properties of neural networks, although without the incorporation of boundary conditions. We additionally point out to \cite{Yarotsky17,He_etal20} for recent results regarding approximation capabilities of ReLU neural networks, including approximation rates.
For deep ReLU neural networks (with at most $\lceil \log_2(d+1) \rceil$ hidden layers), references \cite{He_etal20,Arora18} establish the capability of networks to represent simplicial linear finite element functions, which possess good approximation properties in the $H^1$-norm.
Therefore, if we use a nonconstant activation function $\sigma$, then we expect $d(\vq,\calC_m) \to 0$ when $m\to\infty$ for any $\vq \in H^1(\Omega) \times H(\Div; \Omega)$. 

Condition \eqref{eq:hypothesis} further assumes that the solution $q_0$ can be approximated through the admissible classes $\calA_m$ that incorporate boundary conditions. This hypothesis holds, for example, if one assumes certain regularity of solutions to \eqref{eq:FOS}. For instance, if $u_0 \in C^1(\overline\Omega)$, then it satisfies (recall $\gD = 0$)
\[
\left| \lim_{t \to 0^+} \frac{u_0(z-t\vnu)}{t} \right| = \left| \frac{\pp u_0 }{\pp \vnu} (z) \right| < \infty, \quad z \in \GammaD.
\]
If we write $x = z - t \vnu$, then $t \approx \mbox{dist}(x, \GammaD) \approx \dD(x)$ and the finiteness of the limit above essentially means that $u_0/\dD$ is a bounded function. Additionally,
if we can construct auxiliary functions $\dD$, $\dN$, and $\vn$ in such a way that 
\begin{equation}\label{eq:hipotesis}
	\frac{u_0}{\dD} \in H^1(\Omega), \mbox{ and } \frac{(\vphi_0\cdot\vn)\vn}{\dN} \in H(\Div; \Omega). 
\end{equation} 
then there exists a sequence $\{(v_m,\vpsi_m)\}_{m\in\N}$ with $(v_m,\vpsi_m) \in \calC_m$ for all $m$, such that 

\[
\left\|v_m - \frac{u_0}{\dD} \right\|_{H^1(\Omega)}\to 0 \quad \mbox{and} \quad \left\|\vpsi_m - \sum^{d-1}_{i=1}(\vphi_0\cdot\vt)\vt - \frac{1+\dN}{\dN}(\vphi_0\cdot\vn)\vn\right\|_{H(\Div; \Omega)}\to 0
\]
as $m\to\infty$. Defining the sequence $\{(u_m,\vphi_m)\}_{m\in\N}$ as $u_m = \dD v_m$ and $\vphi_m = \vpsi_m - \Big( \frac{  \vpsi_m \cdot \vn }{1 + \dN} \Big) \vn$, we would have $(u_m,\vphi_m) \in \calA_m$ for all $m$, and $(u_m,\vphi_m) \to (u_0,\vphi_0)$ in $\| \cdot \|_{H^1(\Omega)\times H(\Div;\Omega)}$ and therefore \eqref{eq:hypothesis} would hold. Clearly, \eqref{eq:hipotesis} is a regularity assumption on the solution of \eqref{eq:FOS}, and in turn it translates into its approximability by neural networks.

\subsection{$\Gamma$-convergence}
We aim to prove the convergence of the neural network approximations computed by our method towards minimizers of the least-squares functional $\calL$ in \eqref{eq:LS-loss}. For this purpose, we shall make use of $\Gamma$-convergence theory, that provides a framework for the convergence of functionals. In particular, if one has proven the $\Gamma$-convergence of a sequence of functionals and has a converging sequence of minimizers, then one can guarantee the existence of solutions to the limit problem, as well as the convergence of either minimum values and minimizers. 
We next briefly review the definition and some basic results pertaining to $\Gamma$-convergence and refer to \cite{braides2006} for further details.

\begin{definition}[sequential $\Gamma$-convergence] \label{def:gamma-convergence}
Let $X$ be a metric space and let $F_{n}$, $F: X \to \overline{\R}$, where $\overline{\R}:= [-\infty,+\infty]$. We say that $F_{n}$ $\Gamma$-converges to $F$ (and write $F_n  \xrightarrow[]{\Gamma} F$) if, for every $x \in X$ we have
	\begin{itemize}
		\item \emph{(lim-inf inequality)} for every sequence $\{x_n\}_{n \in \N} \subset X$ converging to $x$,
		\[
		F(x) \le \liminf_{n\to\infty} F_n(x_n) ; 
		\]
		
		\item \emph{(lim-sup inequality)} there exists a sequence $\{x_n\}_{n \in \N}$ converging to $x$ such that
		\[ F(x) \ge \limsup_{n\to\infty} F_n(x_n) .\]
	\end{itemize}
\end{definition}

\begin{definition}[equi-coercivity]\label{def:equicoercividad}
	Let $\{F_n\}_{n \in \N}$ be a sequence of functions $F_n: X \to \overline{\R}$. We say that $\{F_n\}$ is equi-coercive if for all $t \in \R$ there
	exists a compact set $K_t \subset X$ such that $\{F_n \le t\} \subset K_t$.
\end{definition}

\begin{theorem}[fundamental theorem of $\Gamma$-convergence]\label{teo:fund_teo_gamma_conv}
	Let $(X, d)$ be a metric space,	$\{F_n\}_{n\in\N}$ be an equi-coercive sequence of functions on $X$, and $F$ be such that $F_n  \xrightarrow[]{\Gamma} F$. Then,
	$$\exists \min_{X} F = \lim_{n \to\infty} \inf_{X} F_n.$$
	Moreover, if $\{x_n\}_{n\in\N}$ is a precompact sequence in $X$ such that $\lim_{n\to\infty} F_n(x_n) = \lim_{n\to\infty} \inf_{X} F_n$, then every
	limit of a subsequence of $\{x_n\}$ is a minimum point for $F$. 
	
\end{theorem}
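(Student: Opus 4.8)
The plan is to run the classical soft argument (see \cite{braides2006}), combining the two one-sided inequalities defining $\Gamma$-convergence with equi-coercivity; I will only sketch the skeleton. Write $m_n := \inf_X F_n$ and $m := \inf_X F \in \overline{\R}$. The easy half is $\limsup_n m_n \le m$, which I would obtain directly from the lim-sup inequality: for each fixed $x \in X$ pick a recovery sequence $x_n \to x$ with $\limsup_n F_n(x_n) \le F(x)$, use $m_n \le F_n(x_n)$ to get $\limsup_n m_n \le F(x)$, and then take the infimum over $x \in X$.

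Next I would produce a minimizer of $F$ out of equi-coercivity and the lim-inf inequality. After passing to a subsequence (not relabelled) along which $m_n \to \liminf_n m_n =: \ell$, fix a real number $t > \ell$ and choose near-minimizers $x_n$ with $F_n(x_n) < \min\{m_n + 1/n,\, t\}$; equi-coercivity then gives $x_n \in \{F_n \le t\} \subset K_t$ with $K_t$ compact, so a further subsequence converges, $x_{n_k} \to \bar x$. The lim-inf inequality yields
\[
F(\bar x) \le \liminf_k F_{n_k}(x_{n_k}) \le \liminf_k \bigl( m_{n_k} + 1/n_k \bigr) = \ell ,
\]
and chaining with the first step, $m \le F(\bar x) \le \ell \le \limsup_n m_n \le m$. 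Hence all these quantities coincide: $\bar x$ realizes the infimum of $F$, so $\min_X F$ exists and equals $m = \lim_n \inf_X F_n$.

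For the ``moreover'' assertion I would take a precompact sequence $\{x_n\}$ with $F_n(x_n) \to \lim_n \inf_X F_n = \min_X F$, pass to any convergent subsequence $x_{n_k} \to \bar x$, and apply the lim-inf inequality once more to get $F(\bar x) \le \liminf_k F_{n_k}(x_{n_k}) = \min_X F$; thus $F(\bar x) = \min_X F$ and $\bar x$ is a minimum point of $F$.

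I do not expect a genuine obstacle here: the statement is foundational and its proof is purely topological. The only points calling for mild care are the degenerate cases — when $m = +\infty$ one first checks $\liminf_n m_n = +\infty$ by contradiction using the very same compactness argument, and when $\ell = -\infty$ (or $\inf_X F_n = -\infty$ for infinitely many $n$) one instead picks $x_n \in \{F_n \le -k\} \subset K_{t_0}$ for a single fixed $t_0$ and concludes $F(\bar x) = -\infty = \min_X F$ — together with the fact that the compact sets $K_t$ supplied by equi-coercivity need not be nested in $t$, which is exactly why, above, the near-minimizers are kept inside one fixed sublevel set rather than letting $t$ vary with $n$.
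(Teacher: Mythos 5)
Your argument is correct and is the classical proof: the paper itself does not prove this theorem but quotes it as a known result from \cite{braides2006}, whose argument proceeds exactly along the lines you describe (limsup of the infima via recovery sequences, near-minimizers trapped in a fixed compact sublevel set by equi-coercivity, liminf inequality at a cluster point). The only step worth making explicit is your application of the lim-inf inequality along a subsequence $x_{n_k}\to\bar x$: this is legitimate because $\Gamma$-convergence is inherited by subsequences (pad the subsequence with the constant value $\bar x$ at the missing indices to get a full sequence converging to $\bar x$, whose liminf of energies is no larger than that of the subsequence), and stating this one-line fact closes the argument completely.
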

We emphasize that the result above guarantees that the equi-coercivity of a family of functionals combined with their $\Gamma$-convergence yields the convergence of the minimizers towards the minimizers of the $\Gamma$-limit.

\subsection{Convergence of the method}
We split the proof of convergence of our method into several steps.
We start by proving the following auxiliary lemma, that shows the continuity of the neural network functions with respect to the parameters.

\begin{lemma}[continuity with respect to neural network parameters] \label{lem:continuidad_tita}
	The map 
	\[\vTheta \mapsto \vq_\vTheta = (u_{\vTheta},\vphi_{ \vTheta}) \in (\calA_m,\| \cdot \|_{H^1(\Omega) \times H(\Div; \Omega)})\] is continuous. Moreover, defining  the functions $G_1, G_2: \R^m \times \Omega \to \R$,
	\begin{equation}\label{def:func_aux}
		G_1(\vTheta,x):=| \vphi_{\vTheta}(x) - \vA \nabla u_{\vTheta}(x) |^2, \quad G_2(\vTheta,x):=|\Div\vphi_{\vTheta}(x) - B u_{\vTheta}(x) + f(x) |^2,
	\end{equation}
for any $R>0$ we have $G_1 \in L^{\infty}(B(0,R) \times \Omega)$ and, assuming $f \in L^{2}(\Omega)$, there exists a function $s \in L^1(B(0,R) \times \Omega))$, depending on $R$, such that $|G_2(\vTheta , x)| \leq s(\vTheta , x)$ for all $(\vTheta , x) \in B(0,R) \times \Omega$.

\end{lemma}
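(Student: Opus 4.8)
The plan is to prove the two assertions separately, both of which hinge on writing $u_\vTheta$ and $\vphi_\vTheta$ explicitly in terms of the neural-network parameters through the formulas in \eqref{eq:admissible-class} and the definitions of $\calC_m$.

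First, for the continuity of $\vTheta \mapsto \vq_\vTheta$. Recall that $v_\vTheta = B_v\sigma(A_v x + c_v)$ and $\vpsi_\vTheta = B_\vpsi \sigma(A_\vpsi x + c_\vpsi)$, with $\sigma$ smooth and bounded. Since $\sigma$ is $C^1$ with bounded derivative on compact sets, the maps $(\vTheta,x)\mapsto v_\vTheta(x)$, $\nabla v_\vTheta(x)$, $\vpsi_\vTheta(x)$, $\nabla\vpsi_\vTheta(x)$, $\Div\vpsi_\vTheta(x)$ are all continuous (indeed smooth) in $\vTheta$ uniformly for $x$ in the bounded domain $\Omega$, being compositions and products of continuous maps. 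Since $\dD$, $\dN$, $\vn$ are fixed Lipschitz functions with $1+\dN \ge 1$ bounded away from zero, the formulas $u_\vTheta = \dD v_\vTheta$ and $\vphi_\vTheta = \vpsi_\vTheta - \big(\tfrac{\vpsi_\vTheta\cdot\vn}{1+\dN}\big)\vn$ show that $u_\vTheta$, $\nabla u_\vTheta$, $\vphi_\vTheta$ and $\Div\vphi_\vTheta$ depend continuously on $\vTheta$ in the $L^\infty(\Omega)$ norm (using the product rule; $\nabla u_\vTheta = v_\vTheta\nabla\dD + \dD\nabla v_\vTheta$, and similarly $\Div\vphi_\vTheta$ involves $\Div\vpsi_\vTheta$, $\nabla\dN$, and $\nabla\vn$ — here one must be slightly careful since $\vn$ need only be Lipschitz, so $\Div\vphi_\vTheta \in L^\infty(\Omega)$ rather than continuous, but this suffices for the $H(\Div;\Omega)$ norm). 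Therefore if $\vTheta_k \to \vTheta$, each of these quantities converges in $L^2(\Omega)$ (by dominated convergence on the bounded domain, or directly from $L^\infty$ convergence), giving $\|\vq_{\vTheta_k} - \vq_\vTheta\|_{H^1(\Omega)\times H(\Div;\Omega)} \to 0$.

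Second, for the bounds on $G_1$ and $G_2$. For $G_1(\vTheta,x) = |\vphi_\vTheta(x) - \vA\nabla u_\vTheta(x)|^2$: restricting $\vTheta$ to the ball $B(0,R)$, all the parameter matrices $A_v,B_v,c_v,A_\vpsi,B_\vpsi,c_\vpsi$ have entries bounded by $R$; since $\sigma$ is bounded and the domain $\Omega$ is bounded, $v_\vTheta$, $\vpsi_\vTheta$ and (using boundedness of $\sigma'$ on the relevant compact argument range, which is itself controlled by $R$ and $\mathrm{diam}(\Omega)$) their first derivatives are all bounded by a constant depending only on $R$ and the fixed data $\dD,\dN,\vn,\vA$. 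Hence $G_1 \in L^\infty(B(0,R)\times\Omega)$. For $G_2(\vTheta,x) = |\Div\vphi_\vTheta(x) - Bu_\vTheta(x) + f(x)|^2$: the terms $\Div\vphi_\vTheta(x)$ and $Bu_\vTheta(x)$ are bounded on $B(0,R)\times\Omega$ by the same reasoning (note $Bu_\vTheta$ involves only first-order derivatives of $u_\vTheta$ under the hypothesis on $B$), so $|\Div\vphi_\vTheta - Bu_\vTheta| \le C_R$ pointwise; expanding the square, $|G_2(\vTheta,x)| \le 2C_R^2 + 2|f(x)|^2 =: s(\vTheta,x)$, and since $f \in L^2(\Omega)$ and $B(0,R)$ has finite measure, $s \in L^1(B(0,R)\times\Omega)$ by Tonelli's theorem.

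The main obstacle — really the only point requiring care — is the bookkeeping around the auxiliary field $\vn$: it is only assumed Lipschitz (possibly discontinuous in fact, per the remark after \eqref{eq:def-vphi}), so $\nabla\vn$ exists only in $L^\infty(\Omega)$, and consequently $\Div\vphi_\vTheta$ is merely an $L^\infty$ function, not continuous, even when $\vpsi_\vTheta$ is smooth. One must therefore phrase the continuity statement for the $\Div\vphi_\vTheta$ component at the level of $L^2(\Omega)$ (or $L^\infty(\Omega)$) convergence rather than uniform convergence, which is exactly what the $H(\Div;\Omega)$ norm requires, so no genuine difficulty arises; one just avoids claiming pointwise continuity in $x$ for that term. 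Everything else is a routine application of the chain/product rule together with the boundedness of $\sigma$ and $\sigma'$ on compacta and the finiteness of $|\Omega|$.
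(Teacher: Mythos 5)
Your proposal is correct and follows essentially the same route as the paper's proof: explicit parameter-dependence of the one-hidden-layer networks together with boundedness of $\sigma$ and its derivative on compacta gives continuity of $\vTheta\mapsto\vq_\vTheta$ and boundedness of $G_1$ on $B(0,R)\times\Omega$, and the bound $|G_2|\le 2C_R^2+2|f|^2$ yields the $L^1$ majorant exactly as in the paper. The only cosmetic difference is your extra care with the regularity of $\vn$ (Lipschitz, hence $\Div\vphi_\vTheta$ only in $L^\infty$), whereas the paper simply invokes the standing assumption in Section \ref{sec:analysis} that the auxiliary functions $\dD$, $\dN$, $\vn$ are smooth; your version is slightly more general and equally valid.
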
 
\begin{proof}
Let us first focus on a generic neural network $v_{\vTheta}: \R^d \to \R$ with one hidden layer,
	\[v_{\vTheta}(x) = B \sigma( A x + c ).\]
Above, we assume $\sigma$ is a Lipschitz continuous activation function, and the parameters $B \in \R^{1\times n}$, $A \in \R^{n\times d}$ and $c \in \R^{n \times 1}$ are collected in $\vTheta \in \R^m$, $m = n (d+2)$. Using the fact that $v_{\vTheta}$ and its derivatives depend continuously  on the parameters, one can verify easily that the map $\R^m \mapsto W^{1,\infty}(\Omega)$ such that $\vTheta \mapsto v_{\vTheta}$ is continuous. Moreover, the function $G: \R^m \times \Omega \to \R$, defined as $G(\vTheta,x) := v_{\vTheta}(x)$ is Lipschitz continuous, and therefore it is bounded on $B(0,R) \times \Omega$ and its (weak) derivatives are essentially bounded on the same set as well. Furthermore, if $f \in L^2(\Omega)$ then $|G(\vTheta,x) + f(x)|^2 \leq 2|G(\vTheta,x)|^2 + 2|f(x)|^2 \leq 2M + 2|f(x)|^2 =: s(\vTheta,x)$, with $s \in L^1(B(0,R) \times \Omega)$.

For arbitrary neural network functions $(u_{\vTheta},\vphi_{ \vTheta})$ in the space $\calA_m$, defined by \eqref{eq:admissible-class}, we exploit the idea above together with the fact that the auxiliary functions $\dD, \dN$ and $\vn$ are smooth to conclude the desired result.	               
\end{proof}

The following lemma guarantees that, for the loss function $\mathcal{L}$ defined in \eqref{eq:LS-loss}, quasi-minimizers over $\calA_m$ converge towards the minimizer $\vq_0 \in \calA$ as $m \to \infty$.

\begin{lemma}[approximation properties of $\calA_m$]\label{lemma:aprox-inf}
For every $m \in \N$, let us define the set of neural network quasi-minimizers 
\[
\calI_m := \{ \vq \in \calA_m : \calL(\vq) \leq \calL(\vq^*) + 1/m \ \forall \vq^* \in \calA_m \}. 
\]
Then, if $\vq_0$ is the unique minimizer of $\calL$ in $\calA$, we have 
\[
\sup_{\vq_m \in \calI_m}\|\vq_m - \vq_0\|_{H^1(\Omega) \times H(\Div; \Omega)} \to 0
\quad  \mbox{as } m \to \infty.
\]     
\end{lemma}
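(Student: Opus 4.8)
The plan is to use the ellipticity (coercivity) of the least-squares functional $\calL$ together with the approximation hypothesis \eqref{eq:hypothesis}. The key structural fact — referenced in the excerpt as \eqref{eq:funcional_eliptico} and attributed to \cite{Cai94} — is that there exist constants $0 < c_1 \le c_2$ such that
\[
c_1 \, \| \vq - \vq_0 \|_{H^1(\Omega) \times H(\Div; \Omega)}^2 \;\le\; \calL(\vq) - \calL(\vq_0) \;\le\; c_2 \, \| \vq - \vq_0 \|_{H^1(\Omega) \times H(\Div; \Omega)}^2
\]
for all $\vq \in \calA$, where $\vq_0$ is the unique minimizer (the lower bound uses that the homogeneous first-order system is invertible in $H^1 \times H(\Div)$, and the upper bound is just continuity of $\calL$, which is a quadratic functional of $\vq - \vq_0$ after completing the square). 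Granting this, the argument is short.

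First I would bound $\calL(\vq_m) - \calL(\vq_0)$ from above for any $\vq_m \in \calI_m$. Pick an arbitrary $\vq_\vTheta \in \calA_m$; by definition of $\calI_m$ we have $\calL(\vq_m) \le \calL(\vq_\vTheta) + 1/m$, and hence
\[
\calL(\vq_m) - \calL(\vq_0) \;\le\; \big( \calL(\vq_\vTheta) - \calL(\vq_0) \big) + \frac{1}{m} \;\le\; c_2 \, \| \vq_\vTheta - \vq_0 \|_{H^1(\Omega) \times H(\Div; \Omega)}^2 + \frac{1}{m}.
\]
Taking the infimum over $\vq_\vTheta \in \calA_m$ gives $\calL(\vq_m) - \calL(\vq_0) \le c_2 \, d(\vq_0,\calA_m)^2 + 1/m =: \eps_m$, and the bound is uniform over $\vq_m \in \calI_m$. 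By hypothesis \eqref{eq:hypothesis}, $d(\vq_0,\calA_m) \to 0$, so $\eps_m \to 0$. Then the lower ellipticity bound yields
\[
\sup_{\vq_m \in \calI_m} \| \vq_m - \vq_0 \|_{H^1(\Omega) \times H(\Div; \Omega)}^2 \;\le\; \frac{1}{c_1}\big( \calL(\vq_m) - \calL(\vq_0)\big) \;\le\; \frac{\eps_m}{c_1} \;\to\; 0,
\]
which is the claim. One should also note that $\calI_m$ is nonempty: since $\calL$ is continuous and bounded below on $\calA$, the infimum $\inf_{\calA_m} \calL$ is finite, so a $1/m$-quasi-minimizer exists.

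The only genuinely delicate point is making sure the ellipticity estimate \eqref{eq:funcional_eliptico} is actually available and applies on the affine subspace encoding the boundary conditions (the $\gD = \gN = 0$ reduction in the excerpt handles the inhomogeneous case, so it suffices to have coercivity on the homogeneous space). The lower bound is the crucial half and is exactly the FOSLS norm-equivalence from \cite{Cai94}, valid under the standing assumption that \eqref{eq:problem} is invertible in $H^1(\Omega)$ and that $\vA$ is uniformly positive definite and $B$ satisfies the stated bound; I would cite it directly rather than reprove it. Everything else is elementary. If for some reason only the lower bound $c_1\|\vq-\vq_0\|^2 \le \calL(\vq)-\calL(\vq_0)$ is invoked and the upper bound is not stated earlier, the upper estimate still follows in one line: expanding the two $L^2$ norms in $\calL$ and using that $\vq_0$ is the minimizer, $\calL(\vq) - \calL(\vq_0)$ equals the same quadratic form evaluated at $\vq - \vq_0$, which is controlled by $\|\vphi - \vphi_0\|_{L^2}^2 + \|\nabla(u-u_0)\|_{L^2}^2 + \|\Div(\vphi-\vphi_0)\|_{L^2}^2 + \|B(u-u_0)\|_{L^2}^2 \lesssim \|\vq-\vq_0\|_{H^1 \times H(\Div)}^2$, using the boundedness of $\vA$ and the hypothesis on $B$.
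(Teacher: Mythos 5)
Your argument is correct and follows essentially the same route as the paper's proof: bound $\calL(\vq_m)$ via quasi-minimality and the upper (continuity) half of the FOSLS norm equivalence applied to a near-best approximant from $\calA_m$ (using that $\vq_0$ solves the first-order system a.e., so $\calL(\vq)$ is the homogeneous quadratic form at $\vq-\vq_0$), then convert smallness of $\calL(\vq_m)$ into smallness of $\|\vq_m-\vq_0\|_{H^1(\Omega)\times H(\Div;\Omega)}$ by the coercivity half, with hypothesis \eqref{eq:hypothesis} driving $d(\vq_0,\calA_m)\to 0$. The only differences are cosmetic (infimum/$\eps_m$ bookkeeping instead of the paper's $\eps$-argument), and your remarks on nonemptiness of $\calI_m$ and on the homogeneous boundary conditions needed for \eqref{eq:funcional_eliptico} are sound additions rather than deviations.
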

\begin{proof}
	From \cite{Cai94}, we know that $\calL$ is elliptic with respect to the $H^1(\Omega) \times H(\Div; \Omega)$ norm. Namely, there exist positive constants $\alpha$ and $\beta$ such that  
	\begin{equation}\label{eq:funcional_eliptico}
		\alpha\|(u,\vphi)\|_{H^1(\Omega) \times H(\Div; \Omega)} \le \|  \vphi - \vA \nabla u \|_0^2 + \|  \Div(\vphi) - B u \|_0^2 \le \beta\|(u,\vphi)\|_{H^1(\Omega) \times H(\Div; \Omega)}, 
	\end{equation}
for all $(u,\vphi) \in H^1(\Omega) \times H(\Div; \Omega)$.

Let $\eps > 0$. By \eqref{eq:hypothesis}, we consider $m_0 > 0$ such that $d(\vq_0,\calA_m)<\eps$ and $1/m < \eps$ for all $m>m_0$. For every $m > 0$, there exists $\vq^*_{m} = (u^*_{m}, \vphi^*_{m}) \in \calA_{m}$ with $d(\vq_0, \calA_{m}) \ge \|\vq^*_m-\vq_0\|_{H^1(\Omega) \times H(\Div; \Omega)} - \eps$. Then, for all $m>m_0$ and every neural network quasi-minimizer $\vq_m = (u_m, \vphi_m) \in \calI_m$, using that the solution $\vq_0 = (u_0,\vphi_0)$ of \eqref{eq:FOS} satisfies the conditions $\vphi_0 = \vA \nabla u_0$ and $-\Div(\vphi_0) + B u_0 = f$ a.e. in $\Omega$ and exploiting the upper bound in \eqref{eq:funcional_eliptico}, we have
\[ \begin{aligned}
0 & \le \calL(\vq_m) \le \calL(\vq^*_m) + \eps = \|  \vphi^*_m - \vA \nabla u^*_m \|_0^2 + \|  \Div(\vphi^*_m) - B u^*_m +f\|_0^2 + \eps \\
& =  \|  \vphi^*_m - \vphi_0 - \vA \nabla (u^*_m - u_0) \|_0^2 + \|  \Div(\vphi^*_m - \vphi_0) - B (u^*_m - u_0) \|_0^2 + \eps\\ 
& \le \beta  \|\vq^*_m-\vq_0\|_{H^1(\Omega) \times H(\Div; \Omega)} + \eps \le \beta ( d(\vq_0, \calA_m) + \eps) + \eps = (2\beta+1)\eps.
\end{aligned} \] 

Finally, by combining this estimate with the lower bound in \eqref{eq:funcional_eliptico}, and exploiting the fact that $\vq_0$ satisfies \eqref{eq:FOS} a.e. in $\Omega$, we reach the estimate 
\[ \begin{aligned}
 \|\vq_m - \vq_0\|_{H^1(\Omega) \times H(\Div; \Omega)} & \le \frac1\alpha \Big(  \|  \vphi_m - \vphi_0 - \vA \nabla (u_m - u_0) \|_0^2 + \|  \Div(\vphi_m - \vphi_0) + B (u_m - u_0) \|_0^2  \Big) \\
& \le  \frac1\alpha \Big( \|  \vphi_m - \vA \nabla u_m \|_0^2 + \|  \Div(\vphi_m) + B u_m +f\|_0^2 \Big) = 
\frac{\calL(\vq_m)}{\alpha} < \frac{(2\beta + 1)}{\alpha}\eps,
 \end{aligned} \]
for every $\vq_m \in \calI_m$ and $m > m_0$. Since $\eps$ is arbitrary small, this concludes the proof.
\end{proof}

The result above assumes that, given $\vTheta \in \R^m$, one can compute $\calL(u_{\vTheta},\vphi_{ \vTheta})$ exactly. This is not the case in general, because we resort to Monte Carlo integration for the computation of the $L^2$ norms in \eqref{eq:LS-loss}; cf. the discrete loss functional \eqref{eq:loss_discrete}. To deal with this issue, we consider a regularized version of the loss functions $\calL$ and $\calL_N: \calA_m \to \overline\R$, using $\R^m$ as domain. Given $R>0$, we define the regularized functional $L: \R^m \to \R$ as
\begin{equation}\label{eq:reg_func}
	L(\vTheta) := 
	\left\lbrace
	\begin{array}{ll}
		\calL( u_{\vTheta}, \vphi_{\vTheta} ) & \mbox{if } |\vTheta| \le R, \\
		+\infty & \mbox{otherwise. } \\
	\end{array}
	\right.
\end{equation}

Next, we let $\{X_i\}_{i \in \mathbb{N}}$ be an i.i.d. sequence of random variables, defined on a probability space $(\Lambda,\Sigma,P)$ with $X_i:\Lambda \to \Omega \quad \forall i \in \mathbb{N}$, with uniform  probability density on $\Omega$. Given $\lambda \in \Lambda$, $R>0$, and $N \in \N$ we set $V_N(\lambda) := \cup_{i \le N} \{X_i(\lambda)\} $, and the regularized discrete functional $L_{\lambda,N}: \R^m \to \overline\R$ as
\begin{equation}\label{eq:reg_func_dis}
	L_{\lambda,N}(\vTheta) := 
	\left\lbrace
	\begin{aligned}
		& \frac{|\Omega|}{N}\sum_{ x \in V_N(\lambda) } G_1(\vTheta,x) + G_2(\vTheta,x) & \mbox{ if } |\vTheta| \le R, \\
		& +\infty & \mbox{otherwise, } \\
	\end{aligned}
	\right.
\end{equation}
with $G_1$ and $G_2$ as in \eqref{def:func_aux}. 

With these definitions, we can prove the 
pointwise $P$-almost sure convergence of the sequence $\{L_{\lambda,N}\}_{N \in \N}$ towards $L$.

\begin{lemma}[almost sure convergence of regularized discrete loss functions]\label{lemma:conv_puntual}
	Consider $R>0$, $L$ as in \eqref{eq:reg_func}, $L_{\lambda,N}$ and $\{X_i\}_{i \in \mathbb{N}}$ an i.i.d. family of random variables defined in the probability space $(\Lambda,\Sigma,P)$ as in \eqref{eq:reg_func_dis}. Then $L_{\lambda,N}(\vTheta) \to L(\vTheta)$ as $N \to \infty$ $P$-almost surely, for all $\vTheta \in \R^m$. 
\end{lemma}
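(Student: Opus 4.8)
The plan is to recognize $L_{\lambda,N}(\vTheta)$, for $|\vTheta|\le R$, as a Monte Carlo estimator of the integral that defines $L(\vTheta)$, and then invoke the strong law of large numbers. I would fix $\vTheta\in\R^m$ once and for all and split into two cases. If $|\vTheta|>R$, then $L_{\lambda,N}(\vTheta)=+\infty=L(\vTheta)$ for every $N$ and every $\lambda$, so the convergence in $\overline\R$ is immediate. The substance of the lemma is thus the case $|\vTheta|\le R$. For such $\vTheta$, the first step is a measure-zero bookkeeping remark: since each $X_i$ has a non-atomic distribution on $\Omega\subset\Rd$, with probability one the points $X_1(\lambda),X_2(\lambda),\dots$ are pairwise distinct, so that $P$-almost surely
\[
L_{\lambda,N}(\vTheta)=\frac{|\Omega|}{N}\sum_{x\in V_N(\lambda)}\bigl(G_1(\vTheta,x)+G_2(\vTheta,x)\bigr)=\frac{|\Omega|}{N}\sum_{i=1}^N\bigl(G_1(\vTheta,X_i(\lambda))+G_2(\vTheta,X_i(\lambda))\bigr)
\]
for all $N$, which replaces the set-indexed sum by a genuine average of i.i.d.\ terms.

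Next I would set $Y_i:=G_1(\vTheta,X_i)+G_2(\vTheta,X_i)$. Since $G_1(\vTheta,\cdot)$ and $G_2(\vTheta,\cdot)$ are Borel measurable and the $X_i$ are i.i.d., the $Y_i$ are i.i.d.\ as well; and by Lemma~\ref{lem:continuidad_tita}, for $|\vTheta|\le R$ the function $x\mapsto G_1(\vTheta,x)+G_2(\vTheta,x)$ is dominated by an $L^1(\Omega)$ function (recall $f\in L^2(\Omega)$ and $|\Omega|<\infty$), hence integrable. Because $X_1$ has density $|\Omega|^{-1}\mathbf 1_\Omega$, it follows that $\mathbb{E}[|Y_1|]=\mathbb{E}[Y_1]=|\Omega|^{-1}\int_\Omega\bigl(G_1(\vTheta,x)+G_2(\vTheta,x)\bigr)\,dx<\infty$. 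Kolmogorov's strong law of large numbers then gives $\frac1N\sum_{i=1}^N Y_i\to\mathbb{E}[Y_1]$ $P$-almost surely; multiplying by $|\Omega|$ and using the almost-sure identity of the previous paragraph yields $L_{\lambda,N}(\vTheta)\to\int_\Omega\bigl(G_1(\vTheta,x)+G_2(\vTheta,x)\bigr)\,dx$ $P$-almost surely. Finally, by the definitions \eqref{def:func_aux} and \eqref{eq:LS-loss},
\[
\int_\Omega\bigl(G_1(\vTheta,x)+G_2(\vTheta,x)\bigr)\,dx=\|\vphi_\vTheta-\vA\nabla u_\vTheta\|_{L^2(\Omega)}^2+\|\Div\vphi_\vTheta-Bu_\vTheta+f\|_{L^2(\Omega)}^2=\calL(u_\vTheta,\vphi_\vTheta)=L(\vTheta),
\]
which closes the argument.

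I do not expect a serious obstacle here: the engine is simply the strong law of large numbers applied pointwise in $\vTheta$, and the remaining work is bookkeeping. The only two points that require a little care are (i) the integrability of the integrand, so that the strong law actually applies, which I would quote directly from Lemma~\ref{lem:continuidad_tita}; and (ii) the passage from the sum over the \emph{set} $V_N(\lambda)$ to the i.i.d.\ average over $X_1,\dots,X_N$, which is valid off a $P$-null set because the sampling distribution on $\Omega$ is non-atomic. Note also that the statement is ``for every $\vTheta$, $P$-a.s.'' rather than ``$P$-a.s., for every $\vTheta$,'' so no uniformity over $\vTheta$ is needed and a plain pointwise application of the law suffices.
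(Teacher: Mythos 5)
Your argument is correct and follows essentially the same route as the paper's proof: dispose of the case $|\vTheta|>R$ trivially, then apply the strong law of large numbers to the Monte Carlo sums (the paper does it for the two terms separately, you for their sum), with integrability supplied by Lemma~\ref{lem:continuidad_tita}. Your extra bookkeeping remarks (almost-sure distinctness of the sample points and the explicit identification of $\mathbb{E}[Y_1]$ with $|\Omega|^{-1}\calL(u_\vTheta,\vphi_\vTheta)$) are fine and merely make explicit what the paper leaves implicit.
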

\begin{proof}
Since we are using the same parameter $R$ in the definitions of $L$ and $L_{\lambda,N}$, if $|\vTheta| > R$ we have $L(\vTheta) = L_{\lambda,N}(\vTheta) = +\infty$ and there is nothing to be proven. We therefore assume $|\vTheta| \le R$. Recalling $V_N(\lambda) = \cup_{i \le N} \{X_i(\lambda)\} $ with $\lambda \in \Lambda$ and the definition of $G_1$ and $G_2$ \eqref{def:func_aux}, an application of the strong law of large numbers yields
\[
\frac{|\Omega|}{N}\sum_{ x \in V_N(\lambda) } | \vphi(x) - \vA \nabla u(x) |^2 \xrightarrow[N \to \infty]{a.s.} \int_{\Omega} | \vphi - \vA \nabla u |^2,	
\]
and     	
\[
\frac{|\Omega|}{N}\sum_{ x \in V_N(\lambda) } | \Div\vphi(x) - B u(x) + f(x) |^2 \xrightarrow[N \to \infty]{a.s.} \int_{\Omega} | \Div\vphi - B u + f |^2	
\]
for all $(u,\vphi) \in \calA_m$. It follows immdiately that $L_{\lambda,N}(\vTheta) \to L(\vTheta)$ $P$-almost surely as $N \to \infty$.
\end{proof}

We are now in position to prove the almost sure $\Gamma$-convergence of $L_{\lambda,N}$ to $L$ as the number of quadrature points $N \to \infty$.

\begin{theorem}[almost sure $\Gamma$-convergence]\label{teo:gamma_conv}
Let $R>0$, $L$ be as in \eqref{eq:reg_func}, and $L_{\lambda,N}$ and $\{X_i\}_{i \in \mathbb{N}}$ be an i.i.d. family of random variables defined in the probability space $(\Lambda,\Sigma,P)$ as in \eqref{eq:reg_func_dis}. Then, assuming $f \in L^2(\Omega)$, it holds that $L_{\lambda,N} \xrightarrow[]{\Gamma} L$ as $N \to \infty$ $P$-almost surely.
\end{theorem}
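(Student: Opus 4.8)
The plan is to reduce the $\Gamma$-convergence to \emph{uniform} convergence of $L_{\lambda,N}$ to $L$ on the ball $B(0,R) = \{\vTheta \in \R^m : |\vTheta| \le R\}$, combining the pointwise a.s.\ convergence of Lemma~\ref{lemma:conv_puntual} with an equicontinuity argument. Outside $B(0,R)$ there is nothing to prove: since $L\equiv+\infty$ there, the lim-inf inequality holds because any $\vTheta_N\to\vTheta$ with $|\vTheta|>R$ eventually satisfies $|\vTheta_N|>R$, so $L_{\lambda,N}(\vTheta_N)=+\infty$; and the recovery inequality holds with the constant sequence $\vTheta_N\equiv\vTheta$. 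On $B(0,R)$ the function $L$ is finite and continuous, being the composition of the continuous map $\vTheta\mapsto\vq_\vTheta$ of Lemma~\ref{lem:continuidad_tita} with $\calL$, which is continuous on $H^1(\Omega)\times H(\Div;\Omega)$ (cf.\ \eqref{eq:funcional_eliptico}).

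The first and main step is to show that $\{L_{\lambda,N}\}_N$ is equi-Lipschitz on $B(0,R)$ on a full-probability event. From the estimates underlying the proof of Lemma~\ref{lem:continuidad_tita}, the maps $\vTheta\mapsto\vphi_\vTheta(x)-\vA\nabla u_\vTheta(x)$ and $\vTheta\mapsto a(\vTheta,x):=\Div\vphi_\vTheta(x)-Bu_\vTheta(x)$ are bounded on $B(0,R)\times\Omega$ and Lipschitz in $\vTheta$ uniformly in $x\in\Omega$. Hence $G_1$ is Lipschitz in $\vTheta$ uniformly in $x$, and writing $G_2=(a+f)^2$ one gets $|G_2(\vTheta_1,x)-G_2(\vTheta_2,x)|\le C\,(1+|f(x)|)\,|\vTheta_1-\vTheta_2|$, so that
\[
|L_{\lambda,N}(\vTheta_1)-L_{\lambda,N}(\vTheta_2)|\le\Big(C_1+C\,\frac{|\Omega|}{N}\sum_{x\in V_N(\lambda)}|f(x)|\Big)\,|\vTheta_1-\vTheta_2|,\qquad \vTheta_1,\vTheta_2\in B(0,R).
\]
Since $f\in L^2(\Omega)\subset L^1(\Omega)$, the strong law of large numbers provides a set of full probability on which $\frac{|\Omega|}{N}\sum_{x\in V_N(\lambda)}|f(x)|\to\int_\Omega|f|$, hence is bounded uniformly in $N$; on that event the Lipschitz constants of $L_{\lambda,N}$ are bounded independently of $N$.

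Next I would fix a countable dense set $D\subset B(0,R)$ and, via Lemma~\ref{lemma:conv_puntual} and a countable intersection, obtain a full-probability event $\Lambda_0$ (intersected with the one above) on which $L_{\lambda,N}(\vTheta)\to L(\vTheta)$ for every $\vTheta\in D$. For $\lambda\in\Lambda_0$ the sequence $\{L_{\lambda,N}\}_N$ is equi-Lipschitz and equibounded on the compact set $B(0,R)$ and converges on $D$; a standard Arzel\`a--Ascoli argument then gives $L_{\lambda,N}\to L$ uniformly on $B(0,R)$ (every subsequential uniform limit is continuous and agrees with $L$ on $D$, hence equals $L$ by density and continuity of $L$). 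Uniform convergence then yields both $\Gamma$-convergence inequalities on $B(0,R)$: the recovery inequality through the constant sequence, and the lim-inf inequality by splitting a sequence $\vTheta_N\to\vTheta$ into the indices with $|\vTheta_N|\le R$ (where $L_{\lambda,N}(\vTheta_N)=L(\vTheta_N)+o(1)\to L(\vTheta)$ by uniform convergence and continuity of $L$) and those with $|\vTheta_N|>R$ (contributing $+\infty$). Together with the trivial region $|\vTheta|>R$, this proves $L_{\lambda,N}\xrightarrow[]{\Gamma}L$ for every $\lambda\in\Lambda_0$, i.e.\ $P$-almost surely.

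The main obstacle is the equi-Lipschitz bound of the first step: the Lipschitz constant of $L_{\lambda,N}$ inevitably contains a Monte Carlo average of $|f|$, which is not deterministically bounded when $f$ is merely square-integrable; one must pass to a full-probability event via the strong law of large numbers and make sure it can be intersected with the event coming from pointwise convergence on $D$. Everything else is routine bookkeeping around the $+\infty$ penalty outside the ball.
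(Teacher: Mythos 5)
Your proof is correct, but it follows a genuinely different route from the paper. The paper establishes the lim-sup inequality from the pointwise a.s.\ convergence of Lemma~\ref{lemma:conv_puntual} and then proves the lim-inf inequality sequence by sequence: it extracts a subsequence along which $G_1+G_2$ converges a.e.\ (via the continuity of $\vTheta\mapsto\vq_\vTheta$ and $L^1$-convergence), invokes the $L^1$ dominating function from Lemma~\ref{lem:continuidad_tita}, and then combines Egorov's theorem with the strong law of large numbers applied to $\chi_{\calK}s$ to control the Monte Carlo sums. You instead upgrade pointwise a.s.\ convergence to a.s.\ \emph{uniform} convergence on $\overline{B(0,R)}$: an equi-Lipschitz estimate in $\vTheta$ (uniform in $x$ up to a factor $1+|f(x)|$, whose Monte Carlo average is a.s.\ bounded by the SLLN since $f\in L^1(\Omega)$), convergence on a countable dense set, and continuity of $L$ on the closed ball then give uniform convergence, from which both $\Gamma$-inequalities follow immediately, with the $+\infty$ region handled trivially. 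Your approach buys a stronger conclusion (uniform convergence, and a single full-measure event on which the $\Gamma$-convergence holds for all $\vTheta$ and all sequences simultaneously, which is cleaner with respect to the quantifiers than the per-sequence choices of $N_0(\lambda),N_1(\lambda),N_2(\lambda)$ in the paper), at the price of a stronger structural hypothesis: you need Lipschitz dependence of $G_1$ and of $a(\vTheta,x)=\Div\vphi_\vTheta(x)-Bu_\vTheta(x)$ on $\vTheta$, uniformly in $x$. This is available here because the activation is smooth and bounded, the auxiliary functions $\dD,\dN,\vn$ are smooth, $\Omega$ is bounded and the coefficients are $L^\infty$ (the same ingredients underlying Lemma~\ref{lem:continuidad_tita}), but it is strictly more than the continuity-plus-$L^1$-domination hypotheses (H1)--(H3) into which the paper's Egorov argument is abstracted in Section~\ref{sec:general}; consequently the paper's route transfers more readily to the DRM/DGM settings of Remarks~\ref{rem:DRM} and~\ref{rem:DGM}. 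Two small points to tidy up if you write this in full: equiboundedness on the ball also uses the SLLN for $|f|^2$ (or Lipschitzness plus boundedness at a single parameter, which again needs the $|f|^2$ average), and since $\vA$ and the coefficients of $B$ are only defined a.e., your uniform-in-$x$ estimates hold for a.e.\ $x$, which suffices because the sampling law is absolutely continuous, so a.s.\ no sample point falls in the exceptional null set.
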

\begin{proof}
We first observe that the lim-sup inequality is a trivial corollary of Lemma \ref{lemma:conv_puntual}. Indeed, it suffices to consider the recovery sequence $\{\vTheta_N\}_{N \in \N} \subset \R^m$, $\vTheta_N \equiv \vTheta$, and by Lemma \ref{lemma:conv_puntual} we have $L_{\lambda,N}(\vTheta_N) \to L(\vTheta)$ with $N \to \infty$ $P$-almost surely.   

We next prove the lim-inf inequality. Given $\vTheta \in \R^m$, let $\{\vTheta_N\}_{N \in \N} \subset \R^m$ be a sequence of parameters such that $\vTheta_{N} \to \vTheta$. We aim to prove that  
	\begin{equation}\label{eq:liminf}
		L(\vTheta) \le  \liminf_{N \to \infty} L_{\lambda,N}(\vTheta_N).
	\end{equation} 
We observe that, if $|\vTheta|>R$ then there exists $N_0 = N_0(\lambda)$ such that $L(\vTheta)=L_{\lambda,N}(\vTheta_N) = +\infty$ for all $N>N_0$, and \eqref{eq:liminf} trivially holds. Therefore, without loss of generality we assume $\{\vTheta_N\}_{N \in \N} \subset \overline{B(0,R)}$. In that case, we extract a subsequence in such a way that $L_{\lambda,N}(\vTheta_N) \to \liminf_{N \to \infty} L_{\lambda,N}(\vTheta_N)$ and, for the sake of simplicity, we omit the relabeling. By Lemma \ref{lem:continuidad_tita}, the map $\vTheta \mapsto (u_{\vTheta},\vphi_{ \vTheta}) \in (\calA_m,\| \cdot \|_{H^1(\Omega) \times H(\Div; \Omega)})$ is continuous and therefore $(u_{\vTheta_N}, \vphi_{\vTheta_N}) \to (u_{\vTheta}, \vphi_{\vTheta})$ in the $H^1(\Omega) \times H(\Div; \Omega)$ norm. Because $\Omega$ is bounded, this implies 
\[ \begin{aligned}
& \|u_{\vTheta_N}-u_{\vTheta}\|_{L^1(\Omega)} \to 0, & \quad \|\nabla u_{\vTheta_N}-\nabla u_{\vTheta}\|_{L^1(\Omega)} \to 0,\\  
& \|\vphi_{\vTheta_N}-\vphi_{\vTheta}\|_{L^1(\Omega)} \to 0, \quad \mbox{and} & \ \|\Div \vphi_{\vTheta_N}-\Div \vphi_{\vTheta}\|_{L^1(\Omega)} \to 0.
\end{aligned}\]
Then, defining $G_1$ and $G_2$ as in \eqref{def:func_aux}, we extract another subsequence in such a way that $G_1(\vTheta_N,x) + G_2(\vTheta_N,x) \to G_1(\vTheta,x) + G_2(\vTheta,x)$ almost everywhere in $\Omega$, and, as before, we omit the relabeling. 

In order to prove \eqref{eq:liminf}, we are going to show that the latter subsequence satisfies $L_{\lambda,N}(\vTheta_N) \to L(\vTheta)$ with $N \to \infty$ $P$-almost surely. Let $\eps>0$ be an arbitrary number, using the triangle inequality, we split
\begin{equation} \label{eq:split-L}
|L_{\lambda,N}(\vTheta_N) - L(\vTheta)| \le |L_{\lambda,N}(\vTheta_N) - L_{\lambda,N}(\vTheta)| +  |L_{\lambda,N}(\vTheta) - L(\vTheta)|.
\end{equation}   
From Lemma \ref{lemma:conv_puntual}, it follows that $|L_{\lambda,N}(\vTheta) - L(\vTheta)| \to 0$ $P$-almost surely. Thus, there exists $N_0=N_0(\lambda)$ such that $|L_{\lambda,N}(\vTheta) - L(\vTheta)|\le \eps/4$ for all $N>N_0$. 

In order to bound the first term in the right hand side in \eqref{eq:split-L}, we first observe that Lemma \ref{lem:continuidad_tita} shows that $G_1$ is uniformly bounded and $G_2$ is bounded above by some integrable function. Thus, there exists $s \in L^1(\Omega)$, depending on $R$, such that 
\begin{equation}\label{eq:gammac1}
\big| G_1(\vTheta_N,x) + G_2(\vTheta_N,x) - G_1(\vTheta,x) - G_2(\vTheta,x) \big| \leq s(x),  
\end{equation}
for all $(\vTheta,x) \in B(0,R) \times \Omega$.
Now we apply Egorov's Theorem to construct a set $\calK \subset \Omega$ such that $\int_{\calK} s(x) dx < \eps/8$ and $G_1(\vTheta_N, \cdot) + G_2(\vTheta_N,\cdot) \to G_1(\vTheta,\cdot) + G_2(\vTheta,\cdot)$ uniformly in $\Omega \setminus \calK$. We bound
\[
|L_{\lambda,N}(\vTheta_N) - L_{\lambda,N}(\vTheta)| \le A_1 + A_2,
\]
where
\[ \begin{aligned}
A_1 & = \frac{|\Omega|}{N}\sum_{ x \in V_N(\lambda) \cap (\Omega\setminus \calK) } \big| G_1(\vTheta_N,x) + G_2(\vTheta_N,x) - G_1(\vTheta,x) - G_2(\vTheta,x) \big|, \\
A_2 & = \frac{|\Omega|}{N}\sum_{ x \in V_N(\lambda) \cap \calK } \big| G_1(\vTheta_N,x) + G_2(\vTheta_N,x) - G_1(\vTheta,x) -  G_2(\vTheta,x) \big|.
\end{aligned} \]
Using the uniform convergence in $\Omega \setminus \calK$, $P$-almost surely there exists $N_1 = N_1(\lambda)$ such that, if $N > N_1$, then $\big| G_1(\vTheta_N,x) + G_2(\vTheta_N,x) - G_1(\vTheta,x) -  G_2(\vTheta,x) \big| < \frac{\eps}{4|\Omega|}$ for all $x \in \Omega \setminus \calK$. Then, it follows that $A_1 < \eps/4$ if $N>N_1$. 

On the other hand, we use \eqref{eq:gammac1} to derive
\[
A_2 
\le  \frac{|\Omega|}{N}\sum_{ x \in V_N(\lambda) } \chi_\calK(x) s(x).
\]
By the strong law of large numbers, we have 
\[
\frac{|\Omega|}{N}\sum_{ x \in V_N(\lambda) } \chi_\calK(x) s(x) \xrightarrow[N \to \infty]{a.s.} \int_{\calK} s(x) < \frac{\eps}{8}.
\]
Therefore, $P$-almost surely there exists $N_2 = N_2(\lambda)$ such that, if $N>N_2$ then 
\[\Big| \frac{|\Omega|}{N}\sum_{ x \in V_N(\lambda) } \chi_\calK(x) - \int_{\calK} s(x) \Big| < \frac{\eps}{8},
\] 
which implies that $\frac{|\Omega|}{N}\sum_{ x \in V_N(\lambda) } \chi_\calK(x) s(x) < \frac{\eps}{4}$.
Consequently, we have $A_2 <  \frac{\eps}{4}.$

Collecting the estimates above, it follows that $P$-almost surely we can choose $N' = N'(\lambda) = \max \{N_0,N_1,N_2\}$ such that 
$$|L_{\lambda,N}(\vTheta_N) - L(\vTheta)| \le |L_{\lambda,N}(\vTheta_N) - L_{\lambda,N}(\vTheta)| +  |L_{\lambda,N}(\vTheta) - L(\vTheta)| \leq \eps,$$
for all $N > N'$. This shows that \eqref{eq:liminf} holds, and concludes the proof.
\end{proof}

The following theorem is the main result of this section and it roughly states that, if we have a reasonable procedure for the minimization of $L_{\lambda,N}$ on $\calA_m$, then we can expect convergence to the solution $\vq_0$.

\begin{theorem}[convergence] \label{teo:conv}
Suppose that for any fixed $m \in \N$ and $R>0$ we can construct a sequence $\{\vTheta_N\}_{N \in \N} \subset B(0,R) \subset \R^m$ such that  $\lim_{N \to \infty} L_{\lambda,N}(\vTheta_N) = \lim_{N \to \infty} \inf_{\vTheta \in \R^m} L_{\lambda,N}(\vTheta)$, with $L_{\lambda,N}$ defined as in \eqref{eq:reg_func_dis}. Let $(u_0,\vphi_0) = \vq_0 = \arg \min_{\vq \in \calA} \calL(\vq)$. Given $\eps>0$, there $P$-almost surely exist $m_0=m_0(\eps) \in \N$, $R=R(m_0)>0$ and $N_0 = N_0(m_0) \in \N$ such that, if one constructs a sequence $\{\vTheta_N\}_{N \in \N}$ as above, then  
\[\|(u_0,\vphi_0) - (u_{\vTheta_N} , \vphi_{\vTheta_N})\|_{H^1(\Omega) \times H(\Div; \Omega)} \le \eps \quad \mbox{for all } N>N_0,
\]
where $(u_{\vTheta_N} , \vphi_{\vTheta_N})$ is the neural network function defined by the parameters $\vTheta_N$.
\end{theorem}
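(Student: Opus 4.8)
The plan is to freeze the network size first, then the radius of the parameter ball, and only afterwards invoke the $\Gamma$-convergence machinery; all the probabilistic content is inherited from Theorem~\ref{teo:gamma_conv}, so after restricting to a full-measure event everything is deterministic. \textbf{Step 1 (choice of $m_0$ and $R$).} Given $\eps>0$, apply Lemma~\ref{lemma:aprox-inf} to select $m_0=m_0(\eps)\in\N$ with $\sup_{\vq\in\calI_{m_0}}\|\vq-\vq_0\|_{H^1(\Omega)\times H(\Div;\Omega)}<\eps/2$, where $\calI_{m_0}$ denotes the set of $1/m_0$-quasi-minimizers of $\calL$ over $\calA_{m_0}$. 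Now freeze $m_0$. Since $\calA_{m_0}$ is the image of $\R^{m_0}$ under $\vTheta\mapsto\vq_\vTheta$, there is some $\vTheta^\dagger$ with $\calL(\vq_{\vTheta^\dagger})\le\inf_{\vq\in\calA_{m_0}}\calL(\vq)+1/m_0$; pick $R=R(m_0)>0$ so large that $|\vTheta^\dagger|\le R$. By definition of $\calI_{m_0}$ we have $\vq_{\vTheta^\dagger}\in\calI_{m_0}$.

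\textbf{Step 2 ($\Gamma$-convergence at fixed $(m_0,R)$).} With $m_0$ and $R$ fixed, view $L$ and $L_{\lambda,N}$ from \eqref{eq:reg_func} and \eqref{eq:reg_func_dis} as functionals on $X=\R^{m_0}$. Theorem~\ref{teo:gamma_conv} gives, on a full-measure event $\Lambda_0$, that $L_{\lambda,N}\xrightarrow[]{\Gamma}L$; moreover $\{L_{\lambda,N}\}_N$ is equi-coercive because each $L_{\lambda,N}$ equals $+\infty$ outside the compact set $\overline{B(0,R)}=\{|\vTheta|\le R\}$, so $\{L_{\lambda,N}\le t\}\subset\overline{B(0,R)}$ for every $t$. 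Thus, for $\lambda\in\Lambda_0$, Theorem~\ref{teo:fund_teo_gamma_conv} yields that $\min_{\R^{m_0}}L$ exists, equals $\lim_N\inf_{\R^{m_0}}L_{\lambda,N}$, and that every subsequential limit of any precompact sequence $\{\vTheta_N\}$ with $\lim_N L_{\lambda,N}(\vTheta_N)=\lim_N\inf_{\R^{m_0}}L_{\lambda,N}$ is a minimizer of $L$. Since $L(\vTheta)=\calL(\vq_\vTheta)$ for $|\vTheta|\le R$, one has $\min_{\R^{m_0}}L=\min_{|\vTheta|\le R}\calL(\vq_\vTheta)\le\calL(\vq_{\vTheta^\dagger})\le\inf_{\vq\in\calA_{m_0}}\calL(\vq)+1/m_0$, so any minimizer $\vTheta^*$ of $L$ satisfies $\vq_{\vTheta^*}\in\calI_{m_0}$, whence $\|\vq_{\vTheta^*}-\vq_0\|_{H^1(\Omega)\times H(\Div;\Omega)}<\eps/2$.

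\textbf{Step 3 (from subsequences to the full sequence).} Fix $\lambda\in\Lambda_0$ and let $\{\vTheta_N\}_N\subset\overline{B(0,R)}$ be as in the hypothesis; by Step~2 it satisfies $\lim_N L_{\lambda,N}(\vTheta_N)=\lim_N\inf_{\R^{m_0}}L_{\lambda,N}$ and is precompact. If the claimed bound failed, there would be a subsequence with $\|\vq_{\vTheta_{N_k}}-\vq_0\|>\eps$; being bounded in $\R^{m_0}$ it has a further subsequence $\vTheta_{N_{k_j}}\to\vTheta^*$, which by Theorem~\ref{teo:fund_teo_gamma_conv} is a minimizer of $L$, hence $\|\vq_{\vTheta^*}-\vq_0\|<\eps/2$; Lemma~\ref{lem:continuidad_tita} gives $\|\vq_{\vTheta_{N_{k_j}}}-\vq_{\vTheta^*}\|_{H^1(\Omega)\times H(\Div;\Omega)}\to0$, so $\|\vq_{\vTheta_{N_{k_j}}}-\vq_0\|<\eps$ for $j$ large, a contradiction. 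Hence there is $N_0$ (depending on $\lambda$ and on the chosen sequence, with $m_0$ and $R$ as above) such that $\|(u_0,\vphi_0)-(u_{\vTheta_N},\vphi_{\vTheta_N})\|_{H^1(\Omega)\times H(\Div;\Omega)}\le\eps$ for all $N>N_0$.

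The crux is the quantifier order in Step~1: $m_0$ is prescribed by the deterministic approximation result Lemma~\ref{lemma:aprox-inf}, and $R$ must then be chosen large enough that truncating the parameter domain to $\overline{B(0,R)}$ costs nothing at the scale $1/m_0$ --- that is what forces the minimizers of the $\Gamma$-limit $L$ to lie in $\calI_{m_0}$ and hence $\eps/2$-close to $\vq_0$. Everything else is soft: equi-coercivity is automatic from the $+\infty$-truncation, $\Gamma$-convergence and its almost-sure character are exactly Theorem~\ref{teo:gamma_conv}, and upgrading from subsequential to full-sequence convergence uses only compactness in finite dimensions together with the continuity of $\vTheta\mapsto\vq_\vTheta$ from Lemma~\ref{lem:continuidad_tita}.
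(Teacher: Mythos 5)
Your proof is correct and follows essentially the same route as the paper: Lemma \ref{lemma:aprox-inf} fixes $m_0$, the radius $R$ is then chosen so that $\overline{B(0,R)}$ contains a quasi-minimizer (hence minimizers of the truncated functional $L$ lie in $\calI_{m_0}$), and Theorem \ref{teo:gamma_conv} together with the automatic equi-coercivity and Theorem \ref{teo:fund_teo_gamma_conv} yields the convergence. Your Step 2 and Step 3 merely spell out details the paper leaves implicit (existence of minimizers of $L$, and the subsequence-contradiction argument combined with Lemma \ref{lem:continuidad_tita} to upgrade cluster-point convergence to the stated bound for all $N>N_0$), which is a faithful elaboration rather than a different argument.
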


\begin{proof}
Let $\eps > 0$ and consider the set of neural network quasi-minimizers introduced in Lemma \ref{lemma:aprox-inf}, $\calI_m = \{ \vq \in \calA_m : \calL(\vq) \leq \calL(\vq^*) + 1/m \ \forall \vq^* \in \calA_m \}$. By that lemma, there exists $m_0 > 0$ such that 
\begin{equation}\label{eq:teo_conv_1}
	\| \vq_0 - \vq_{m_0}\|_{H^1(\Omega) \times H(\Div; \Omega)} < \eps/2,
\end{equation}
for all $\vq_{m_0} \in \calI_{m_0}$. Next, we fix $R_0>0$ large enough so that there exists $\vTheta \in B(0,R_0)$ with $\vq_\vTheta^* = (u_{\vTheta}^*,\vphi_{ \vTheta}^*) \in \calI_{m_0}$. For the functional $L$ defined in \eqref{eq:reg_func}, this implies that $q_{\vTheta} \in \calI_m$ for all $\vTheta \in \arg \min_{\vTheta \in B(0,R_0)} L(\vTheta)$.   

For this choice of $m_0$ and $R_0$, from Theorem \ref{teo:gamma_conv} we have $L_{\lambda,N} \xrightarrow[]{\Gamma} L$ P-almost surely. From the definition of $L_{\lambda,N}$  \eqref{eq:reg_func_dis}, it follows immediately that $\{L_{\lambda , N}\}_{N \in \N}$ is an equi-coercive sequence, according to Definition \ref{def:equicoercividad}. Therefore, 
we deduce that $P$-almost surely there exists $N_0 > 0$ such that
\begin{equation}\label{eq:teo_conv_2}
\| (u_{\vTheta_N},\vphi_{\vTheta_N}) - \vq_{m_0} \|_{H^1(\Omega) \times H(\Div; \Omega)}  < \eps/2
\end{equation}
for all $N>N_0$ for some $\vq_{m_0} \in \calI_{m_0}$. This bound follows by Theorem \ref{teo:fund_teo_gamma_conv} because every cluster point of $\{ \vTheta_N\}$ is a minimum point for $L$, and because of the continuity of the map $\vTheta \mapsto (u_{\vTheta},\vphi_{ \vTheta})$.

The proof concludes upon combining \eqref{eq:teo_conv_1} and \eqref{eq:teo_conv_2}.    
\end{proof}

\section{General framework} \label{sec:general}
In this section, we extend the theoretical analysis we performed in Section \ref{sec:analysis} and put it into an abstract framework. Afterwards, we illustrate how such a framework applies to some well-established unstructured neural-network methods for the approximation of PDEs.
 
Let $\Omega \subset \R^d$ and $\gamma \in \N$. We assume our problem is posed in some admissible vector space
\[
\calA \subset W^{\gamma,1}_{loc}(\Omega; \R^n),
\]
namely, that every function $\vq \in \calA$ has locally integrable weak derivatives of order up to $\gamma$. The space $\calA$ may or may not include boundary conditions or constraints of any type. In the setting we described in Section \ref{sec:introduction}, the target dimension is $n = 1+d$, the differentiability index is $\gamma=1$, and we identify $\calA \ni \vq = (u, \vphi)$.
Additionally, we assume the space $\calA$ is furnished with some norm $\| \cdot \|_{\calA}$, which in our setting corresponds to the $H^1(\Omega) \times H(\Div; \Omega)$-norm.

We consider $\omega_1,...,\omega_K$ Borel subsets of $\overline\Omega$, each $\omega_i$ furnished with a finite Radon measure $\mu_i$, and some given functions $f_1,...,f_{n_f}$ with $f_i:\Omega \to \R$. 
 Given some integrable functions $F_i: \R^{n_\gamma n +n_f+d} \to \R$, $1 \le i \le K$, we define the loss functional 
\begin{equation*}
	\calL(\vq) := \sum^K_{i=1} \int_{\omega_i} F_i(D^{\alpha_1}\vq, \ldots ,D^{\alpha_{n_\gamma}}\vq, f_1, \ldots ,f_{n_f},x) \, d\mu_i,
\end{equation*}
with $F_i$ in such a way that all the integrals involved are well defined. Namely, we assume the loss functional consists of $K$ terms, each of which may be defined on different subdomains of $\overline\Omega$. Each of these terms involves certain partial derivatives of $\vq$ of order up to $\gamma$.
The subdomains $\omega_i$ need not be open; for example, we could allow for $\omega_i \subset \partial\Omega$ and the corresponding term would be able to accommodate boundary data. In such a case, the corresponding trace operator must be bounded on the space $\calA$.

Consider now a space $\calA_m \subset \calA$ in such a way that we have a surjective map $\vTheta: \R^m \mapsto \calA_m$. In the setting from Section \ref{sec:analysis}, this space consists of the functions obtained through a neural network with a modification to account for boundary conditions, cf. \eqref{eq:admissible-class}. We denote by $\vq_{\vTheta}$ a generic element of $\calA_m$. For $1 \le i \le K$, we define $G_i(\vTheta,x): \R^n \times \Omega \to \R$ as
\begin{equation*}
	G_i(\vTheta,x) = F_i(D^{\alpha_1}\vq_{\vTheta},\ldots,D^{\alpha_{n_\gamma}}\vq_{\vTheta},f_1,\ldots, f_{n_f},x)
\end{equation*}
and, given $R>0$, we define the regularized loss functional $L: \R^m \to \R$
\begin{equation}\label{eq:reg_func_gen}
	L(\vTheta) := 
	\left\lbrace
	\begin{array}{ll}
		\calL( \vq_{\vTheta} ) & \mbox{if } |\vTheta| \le R, \\
		+\infty & \mbox{otherwise}. \\
	\end{array}
	\right.
\end{equation} 

Let $\{X^1_j\}_{j \in \mathbb{N}},...,\{X^K_j\}_{j \in \mathbb{N}}$ i.i.d. sequences of random variables, defined in the probability space $(\Lambda,\Sigma,P)$ with $X^i_j:\Lambda \to \omega_i \quad \forall j \in \mathbb{N}$, $1 \le i \le K$, in such a way that the probability density $\overline\mu^i$ of $X^i_j$ is distributed as $\mu_i$ on $\omega_i$, that is 
\[
\overline\mu^i(E) = \frac{\mu_i(E)}{\mu_i(\omega_i)} \quad \mbox{for every Borel set } E \subset \omega_i .
\]

Given $\lambda \in \Lambda$, $R>0$, and $N \in \N$ we define the sampling nodes $V^i_N(\lambda) := \cup_{j \le N} \{X^i_j(\lambda)\} $, and the regularized discrete loss functional $L_{\lambda,N}: \R^m \to \R$,
\begin{equation}\label{eq:reg_func_dis_gen}
	L_{\lambda,N}(\vTheta) := 
	\left\lbrace
	\begin{aligned}
		& \sum^K_{i=1}\frac{\mu_i(\omega_i)}{N} \sum_{ x \in V^i_N(\lambda) } G_i(\vTheta,x) & \mbox{if } |\vTheta| \le R, \\
		& +\infty & \mbox{otherwise.} \\
	\end{aligned}
	\right.
\end{equation}

In order to extend our convergence estimates in Section \ref{sec:analysis} to a general framework,
we consider the following hypotheses:
\begin{itemize}
	\item[(H1)] The map $\R^m \mapsto (\calA_m,\|\cdot\|_{\calA})$ with $\vTheta \mapsto \vq_\vTheta$ is  continuous.

	\item [(H2)] 
For all $1 \le i \le K$ and every convergent sequence $\{\vq_{\vTheta_n}\}_{n\in\N} \subset \calA_m$, with $\vq_{\vTheta_n} \to \vq_{\vTheta} \in \calA_m$ with respect to the $\calA$-norm, there exists a subsequence $\{\vq_{\vTheta_{n_j}}\}_{j\in\N}$ such that $G_i(\vTheta_{n_j},x) \to G_i(\vTheta,x)$ $\mu_i$-almost everywhere.

	\item [(H3)] For every $R>0$, there exist functions $s_i \in L^1_{\mu_i}(\omega_i)$ such that $|G_i(\vTheta,x)| \le s_i(x)$ for all $1 \le i \le K$, for all $\vTheta \in B(0,R)$, and $\mu_i$-almost every $x \in \omega_i$.

	\item [(H4)] The loss function $\calL$ has a unique minimizer $\vq_0 \in \calA$.
		
	\item [(H5)] Let $\calI_m := \{ \vq \in \calA_m : \calL(\vq) \leq \calL(\vq^*) + 1/m \ \forall \vq^* \in \calA_m \}$ be the set of neural network quasi-minimizers. Then, $\sup_{\vq_m \in \calI_m} \|\vq_m - \vq_0\|_{\calA} \to 0$ as $m \to \infty$.

\end{itemize}

Let us comment on these assumptions and how they relate to our analysis in the previous section. Hypothesis (H1) corresponds to the first part in the conclusion of Lemma \ref{lem:continuidad_tita}, and guarantees the stability of neural network functions with respect to the parameters. Hypothesis (H2) roughly states that, for neural network functions, one can pass from convergence in $\calA$ to almost everywhere convergence (up to a subsequence). In our setting, we showed this condition to hold in the proof of Theorem \ref{teo:gamma_conv}. Our assumption (H3) requires the existence of an $L^1$-upper bound for the terms $G_i$. This condition appeared in the second part of Lemma \ref{lem:continuidad_tita}. The ellipticity of the loss functional $\calL$ guarantees that hypothesis (H4) is satisfied. Finally, hypothesis (H5) involves the approximability of the solution to the continuous problem by the neural network quasi-minimizers of $\calL$. In our setting, this appeared in Lemma \ref{lemma:aprox-inf}, and is a consequence of ellipticity and assumption \eqref{eq:hypothesis}. Clearly, in the definition of the set $\calI_m$, one can equivalently use any positive sequence $a_m \searrow 0$ instead of our default choice $a_m = 1/m$.

The following two results extend Theorem \ref{teo:gamma_conv} and Theorem \ref{teo:conv}, respectively; we outline the main steps of their proofs. We first address the $\Gamma$-convergence of the regularized discrete functionals.

\begin{theorem}[almost sure $\Gamma$-convergence, general case]\label{teo:gamma_conv_gen}
Let $R>0$, and $L$, $L_{\lambda,N}$ be as in \eqref{eq:reg_func_gen} and \eqref{eq:reg_func_dis_gen}, respectively. Then, under assumptions (H1), (H2), and (H3), it holds that $L_{\lambda,N} \xrightarrow[]{\Gamma} L$ with $N \to \infty$ P-almost surely.
\end{theorem}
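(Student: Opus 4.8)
The plan is to mirror the proof of Theorem \ref{teo:gamma_conv}, replacing each concrete estimate with the corresponding abstract hypothesis. As before, there are two inequalities to verify, and both follow from the pointwise almost-sure convergence $L_{\lambda,N}(\vTheta) \to L(\vTheta)$ for fixed $\vTheta$, which is the natural starting point. First I would record this pointwise fact: for $|\vTheta| > R$ both sides are $+\infty$, while for $|\vTheta| \le R$ each of the $K$ sums $\frac{\mu_i(\omega_i)}{N}\sum_{x \in V^i_N(\lambda)} G_i(\vTheta,x)$ converges $P$-almost surely to $\int_{\omega_i} G_i(\vTheta,\cdot)\, d\mu_i$ by the strong law of large numbers, using (H3) to ensure $G_i(\vTheta,\cdot) \in L^1_{\mu_i}(\omega_i)$ so that the SLLN applies; summing over the finitely many indices $i$ gives $L_{\lambda,N}(\vTheta) \to L(\vTheta)$ almost surely. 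The $\limsup$ (recovery) inequality is then immediate by taking the constant sequence $\vTheta_N \equiv \vTheta$.

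For the $\liminf$ inequality, given $\vTheta_N \to \vTheta$ I would argue exactly as in Theorem \ref{teo:gamma_conv}. If $|\vTheta| > R$ then eventually $|\vTheta_N| > R$ as well (or one treats the boundary case $|\vTheta| = R$ by noting $L_{\lambda,N}(\vTheta_N) \ge 0$ is still a valid lower bound for $L(\vTheta)$ when the liminf is $+\infty$, and otherwise passes to a subsequence in $\overline{B(0,R)}$), so without loss of generality $\{\vTheta_N\} \subset \overline{B(0,R)}$; pass to a subsequence realizing the $\liminf$, and relabel. By (H1), $\vq_{\vTheta_N} \to \vq_{\vTheta}$ in the $\calA$-norm, and then by (H2) a further subsequence satisfies $G_i(\vTheta_N, x) \to G_i(\vTheta, x)$ for $\mu_i$-a.e.\ $x \in \omega_i$, for every $i$ (extract successively over the finitely many $i$). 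The goal is to show $L_{\lambda,N}(\vTheta_N) \to L(\vTheta)$ along this subsequence. Fixing $\eps > 0$, split
\[
|L_{\lambda,N}(\vTheta_N) - L(\vTheta)| \le |L_{\lambda,N}(\vTheta_N) - L_{\lambda,N}(\vTheta)| + |L_{\lambda,N}(\vTheta) - L(\vTheta)|,
\]
where the second term is controlled by the pointwise convergence already established. For the first term, treat each of the $K$ summands separately: by (H3) we have $|G_i(\vTheta_N,x) - G_i(\vTheta,x)| \le 2 s_i(x)$ with $s_i \in L^1_{\mu_i}(\omega_i)$, so Egorov's theorem yields a set $\calK_i \subset \omega_i$ with $\int_{\calK_i} s_i\, d\mu_i$ arbitrarily small and $G_i(\vTheta_N,\cdot) \to G_i(\vTheta,\cdot)$ uniformly on $\omega_i \setminus \calK_i$; the contribution from $\omega_i \setminus \calK_i$ is then small for $N$ large by uniform convergence, and the contribution from $\calK_i$ is bounded by $\frac{\mu_i(\omega_i)}{N}\sum_{x \in V^i_N(\lambda)} \chi_{\calK_i}(x)\, 2 s_i(x)$, which converges $P$-almost surely to $2\int_{\calK_i} s_i\, d\mu_i$ by the SLLN and is hence small. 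Summing the finitely many error terms gives the bound.

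The main obstacle, as in the concrete case, is purely bookkeeping rather than conceptual: one must extract subsequences finitely many times (once to realize the $\liminf$, then once per index $i$ for the a.e.\ convergence in (H2)) and, more delicately, handle the measure-zero exceptional sets coming from the various applications of the SLLN uniformly in the parameters. Since (H2) only grants almost-everywhere convergence \emph{along a subsequence}, and the Egorov sets $\calK_i$ depend on that subsequence, some care is needed to ensure the final statement is about a single almost-sure event; this is handled by taking a countable intersection over $\eps = 1/\ell$ and over the finitely many SLLN invocations, exactly as the proof of Theorem \ref{teo:gamma_conv} does implicitly. No genuinely new difficulty arises beyond what was dealt with there, which is precisely why hypotheses (H1)--(H3) were isolated.
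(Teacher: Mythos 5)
Your proposal is correct and follows essentially the same route as the paper's proof: the lim-sup inequality via the constant recovery sequence and the strong law of large numbers, and the lim-inf inequality via subsequence extraction, (H1) for convergence in the $\calA$-norm, (H2) for $\mu_i$-a.e.\ convergence along a further subsequence, and (H3) plus Egorov's theorem on each $\omega_i$ with the SLLN controlling the contribution of the exceptional sets. The only difference is that you spell out some bookkeeping (boundary case $|\vTheta|=R$, countable intersection of almost-sure events) that the paper leaves implicit.
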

\begin{proof}
The arguments used in the proof of Theorem \ref{teo:gamma_conv} can be easily adapted to this case. 
Indeed, the lim-sup inequality follows trivially by taking the recovery sequence $\{\vTheta_N\}_{N \in \N}$, $\vTheta_N \equiv \vTheta$ and using a strong law of large numbers.

To prove the lim-inf inequality, we start from a bounded sequence of parameters $\{ \vTheta_N\}_{N \in \N}$ and use (H1) to extract a converging subsequence $\{ \vq_{\vTheta_N}\}_{N \in \N}$ in the $\calA$-norm. Then, by (H2) we can extract another subsequence such that $G_i(\vTheta_{n_j},x) \to G_i(\vTheta,x)$ $\mu_i$-almost everywhere for all $1 \le i \le K$ and by (H3) we know that every function $G_i$ has an upper bound in $L_{\mu_i}^1( \Omega)$. The conclusion then follows by applying Egorov's Theorem on every subset $\omega_1, \ldots \omega_K$.
\end{proof}

Once we have the almost sure $\Gamma$-convergence of the regularized discrete functionals, the convergence of the neural network minimizers can be proved by arguing as in Theorem \ref{teo:conv}.

\begin{theorem}[convergence, general case] \label{teo:conv_general} 
Assume hypotheses (H1)--(H5) are satisfied, and
suppose that for any fixed $m \in \N$ and $R>0$ we can construct a sequence $\{\vTheta_N\}_{N \in \N} \subset B(0,R) \subset \R^m$ such that  $\lim_{N \to \infty} L_{\lambda,N}(\vTheta_N) = \lim_{N \to \infty} \inf_{\vTheta \in \R^m} L_{\lambda,N}(\vTheta)$, with $L_{\lambda,N}$ defined as in \eqref{eq:reg_func_dis}.
Let $\vq_0 = \arg \min_{\vq \in \calA} \calL(\vq)$. Given $\eps>0$ there exist $m_0=m_0(\eps) \in \N$, $R=R(m_0)>0$ and $N_0 = N_0(m_0) \in \N$ P-almost surely, such that 
\[
\| \vq_0 - \vq_{\vTheta_N}\|_{\calA} \le \eps \quad \mbox{for all } N>N_0,
\]
where $\vq_{\vTheta_N} \in \calA_{m_0}$ is the neural network function defined by the parameters $\vTheta_N$.   
\end{theorem}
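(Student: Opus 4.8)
The plan is to transcribe the proof of Theorem~\ref{teo:conv} into the abstract setting, using (H1)--(H5) and Theorem~\ref{teo:gamma_conv_gen} in place of Lemmas~\ref{lem:continuidad_tita} and~\ref{lemma:aprox-inf} and Theorem~\ref{teo:gamma_conv}.

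First I would fix $\eps>0$ and apply (H5) to obtain $m_0=m_0(\eps)\in\N$ such that every quasi-minimizer $\vq_{m_0}\in\calI_{m_0}$ satisfies $\|\vq_0-\vq_{m_0}\|_{\calA}<\eps/2$. Since the parametrization $\R^{m_0}\to\calA_{m_0}$ is surjective, I can then choose $R=R(m_0)>0$ large enough that $B(0,R)\subset\R^{m_0}$ contains some $\vTheta^*$ with $\vq_{\vTheta^*}\in\calI_{m_0}$. The key observation is that, with this $R$, every $\vTheta\in\arg\min_{|\vTheta|\le R}L(\vTheta)$ has $\vq_{\vTheta}\in\calI_{m_0}$: indeed $\calL(\vq_{\vTheta})=L(\vTheta)\le L(\vTheta^*)=\calL(\vq_{\vTheta^*})\le\inf_{\calA_{m_0}}\calL+1/m_0$, so $\vq_{\vTheta}$ is a $1/m_0$-quasi-minimizer of $\calL$ over all of $\calA_{m_0}$.

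Next I would invoke Theorem~\ref{teo:gamma_conv_gen} (applicable because (H1)--(H3) hold) to get $L_{\lambda,N}\xrightarrow[]{\Gamma}L$ as $N\to\infty$ $P$-almost surely, and observe that $\{L_{\lambda,N}\}_{N\in\N}$ is equi-coercive in the sense of Definition~\ref{def:equicoercividad}: by \eqref{eq:reg_func_dis_gen} every sublevel set $\{L_{\lambda,N}\le t\}$ is contained in the compact set $\overline{B(0,R)}$, independently of $N$ and $\lambda$. Given the sequence $\{\vTheta_N\}_{N\in\N}\subset B(0,R)$ posited in the statement, which satisfies $L_{\lambda,N}(\vTheta_N)\to\lim_{N\to\infty}\inf_{\R^{m_0}}L_{\lambda,N}$, Theorem~\ref{teo:fund_teo_gamma_conv} then yields that $\{\vTheta_N\}$ is precompact and that every cluster point $\bar\vTheta$ is a minimizer of $L$, hence $\vq_{\bar\vTheta}\in\calI_{m_0}$ by the previous paragraph. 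By (H1) the map $\vTheta\mapsto\vq_{\vTheta}$ is continuous, so every cluster point of $\{\vq_{\vTheta_N}\}$ has the form $\vq_{\bar\vTheta}\in\calI_{m_0}$; combining this with precompactness, $P$-almost surely there is $N_0=N_0(m_0)$ such that for all $N>N_0$ there exists $\vq_{m_0}\in\calI_{m_0}$ with $\|\vq_{\vTheta_N}-\vq_{m_0}\|_{\calA}<\eps/2$. The triangle inequality $\|\vq_0-\vq_{\vTheta_N}\|_{\calA}\le\|\vq_0-\vq_{m_0}\|_{\calA}+\|\vq_{m_0}-\vq_{\vTheta_N}\|_{\calA}<\eps$ finishes the argument.

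The step I expect to be the main obstacle is the last one: passing from ``every cluster point of $\{\vq_{\vTheta_N}\}$ lies in $\calI_{m_0}$'' to a bound uniform in $N>N_0$, since neither $\arg\min L$ nor $\calI_{m_0}$ need be a singleton. This is handled by a contradiction argument: if no such $N_0$ existed, one could extract a subsequence of $\{\vTheta_N\}$ whose associated functions stay at $\calA$-distance $\ge\eps/2$ from $\calI_{m_0}$, and then, using the precompactness furnished by equi-coercivity together with the continuity (H1), pass to a further subsequence converging to some $\vq_{\bar\vTheta}$ with $\bar\vTheta$ a minimizer of $L$ but $\vq_{\bar\vTheta}\notin\calI_{m_0}$, contradicting the key observation. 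Everything else is a routine adaptation of the proof of Theorem~\ref{teo:conv}.
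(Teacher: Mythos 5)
Your proposal follows essentially the same route as the paper's proof: (H5) gives $m_0$, a large enough $R$ guarantees minimizers of the regularized functional $L$ over $B(0,R)$ land in $\calI_{m_0}$, Theorem \ref{teo:gamma_conv_gen} plus equi-coercivity and the fundamental theorem of $\Gamma$-convergence give the cluster-point property, and the triangle inequality concludes. Your explicit contradiction argument for extracting a uniform $N_0$ (handling the possible non-uniqueness of minimizers of $L$ and of elements of $\calI_{m_0}$) is a correct filling-in of a step the paper leaves implicit, not a different method.
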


\begin{proof}
We first remark that hypothesis (H4) is needed to guarantee the existence of a well-defined minimizer $\vq_0 \in \calA$, and therefore (H5) is meaningful.
Given $\eps > 0$, we use hypothesis (H5) to find $m_0 \in \N$ such that, if $\vq_{m_0} \in \calI_{m_0}$ then  $ \| \vq_0 - \vq_{m_0}\|_{\calA} < \eps/2$. 

Next, we fix $R_0>0$ large enough so that there exists $\vTheta \in B(0,R_0)$ with $\vq_{ \vTheta} \in \calI_{m_0}$, and use this $R_0$ in Theorem \ref{teo:gamma_conv_gen} to deduce that $L_{\lambda,N} \xrightarrow[]{\Gamma} L$ with $N \to \infty$ P-almost surely.
The result then follows by the equi-coercivity of the sequence $\{L_{\lambda , N}\}_{N \in \N}$ by applying the fundamental theorem of $\Gamma$-convergence (Theorem \ref{teo:fund_teo_gamma_conv}).
\end{proof}

We next discuss how two well-known methods fit into the framework in hypotheses (H1)--(H5), and thus Theorem \ref{teo:conv_general} establishes their convergence.

\begin{remark}[Deep Ritz Method] \label{rem:DRM}
The DRM was proposed by E and Yu in \cite{E18}, and is tailored for numerically solving variational problems. A prototypical example is the homogeneous Dirichlet problem, that corresponds to the minimization of the energy $\calL \colon H^1_0(\Omega) \to \R$,
\[
\calL(u) = \frac12 \int_\Omega |\nabla u|^2 - \int_\Omega f u.
\]
We assume $ \| f \|_{L^{2}(\Omega)} <\infty $, consider $\calA = H^1_0(\Omega)$, and define the neural network spaces $\calA_m$ as in \eqref{eq:admissible-class}. Arguing as in Section \ref{sec:analysis}, it is possible to show that hypotheses (H1)--(H4) hold for this loss function. Indeed, (H1) and (H3) can be proved in the same fashion as Lemma \ref{lem:continuidad_tita}, while (H2) follows because for every bounded sequence in $H^1_0(\Omega)$ we can extract an almost everywhere convergent subsequence, and (H4) is a standard PDE result. Finally, hypothesis (H5) can be obtained from classical approximation results \cite{Cybenko89, hornik1991, Barron93, Yarotsky17,He_etal20}.
\end{remark}

\begin{remark}[Deep Galerkin Method] \label{rem:DGM}
The DGM was introduced by Sirignano and Spiliopoulos in \cite{DGM18}, and uses as loss functional the $L^2$-norm of the PDE residual on the neural network functions. Within the
convergence framework in \cite[Section 7]{DGM18}, and the conditions assumed there, we set
$\calA: = \mathcal{C}^{0, \delta, \delta/2}(\overline {\Omega}_{T}) \cap L^2((0, T]; W^{1,2}_0 (\Omega)) \cap W^{(1,2), 2}_0 (\Omega_T ') $, where $ \delta> 0 $ and $ \Omega_T'$ is any interior subdomain of $\Omega_T$, cf. Theorem 7.3. We furnish this space with the $\| \cdot \|_{H^2 (\Omega_T)}$ norm, and define $\calA_m$ according to \eqref{eq:admissible-class}.

Then, assumptions (H1) and (H3) can be verified by arguing as in Lemma \ref{lem:continuidad_tita} by requiring suitable regularity assumptions on the initial and boundary data and parameters of the equation; for example, these hold straightforwardly for these data and parameters are bounded.
Hypothesis (H2) can be proved by using the boundedness of $\Omega_T$ and arguing as in the proof of Theorem \ref{teo:gamma_conv} to exploit the convergence properties of the $H^2(\Omega)$-norm. Finally, hypotheses (H4) and (H5) are addressed in \cite[Theorem 7.3]{DGM18}. We observe that despite this result is stated for a single minimizing sequence $\{f^n\}$, defined in \cite[Theorem 7.1]{DGM18}, the arguments applies "uniformly" to any possible construction of $\{f^n\}$, and then (H5) is verified. Finally, we point out that the convergence of discrete minimizers of $\calL$ is proven in the weaker norm $ \| \cdot \|_{L^{\rho}(\Omega_T)}$, with $\rho <2$. Therefore, our conclusion in Theorem \ref{teo:conv_general} is valid if we measure convergence in such a norm.
\end{remark}

\section{Numerical experiments} \label{sec:numerical}

In this section, we present numerical results for the method we proposed in Section \ref{sec:description}. We did not prioritize any particular neural network architecture, and used between one- and five-layer networks with sigmoidal activation functions to construct $u_{\vTheta}$ and $\vphi_{\vTheta}$. For the construction of the auxiliary functions $\vn$, $\dD$, $\dN$, $\GD$, $\GN$, we used between one and three-layer networks with less neurons per layer. In the training process, we used the ADAM \cite{ADAM} algorithm to update the parameters, with a decaying learning rate schedule.
  
We observe an improvement in the method's performance when
explicit approximations of the auxiliary functions $\dD$ and $\dN$ are used. These functions, which depend on the geometry of the domain, are many times explicitly available in practice.

We recall that, as explained in sections \ref{sec:analysis} and \ref{sec:general}, the numerical solution depends on the number of degrees of freedom $m$ and the number of collocation points $N$. Both must go to infinity to guarantee convergence. In all the numerical examples we show below, these quantities remain fixed. Therefore, in these examples the convergence as a function of the iterations occurs towards the minimizer of the discrete loss functional $L_{\lambda,N}$ (cf. \eqref{eq:reg_func_dis}) corresponding to the values of $m$ and $N$ we have set.

\begin{example}[Laplace operator]
We consider the following problem in arbitrary dimension. Let $\Omega = \{ x\in \Rd : \ -1< x_1,..., x_d< 1\}$, $\GammaN = [-1,1]^{d-1} \times \{1\},$ and $k \in \mathbb{N}$. We seek $u \colon \Omega \to \R$ such that
\begin{equation}\label{eq:example1}
\left\lbrace \begin{aligned}
	-\Delta u & = \prod^{d-1}_{i=1}\sin(k \pi x_i) \left( (d-1)k^2 \pi^2 (1-x_d^2) + 2 \right) \quad & \mbox{in } \Omega,\\
	u & = 0 \quad & \mbox{on } \partial\Omega \setminus \GammaN,\\
	\nabla u \cdot \vnu & = -2\prod^{d-1}_{i=1}\sin(k \pi x_i) \quad & \mbox{on } \GammaN.\\
\end{aligned} \right.
\end{equation}
Here, we have $\vnu = (0,...,0,1)$ on $\GammaN$, and the solution to \eqref{eq:example1} is
\[
u = \prod^{d-1}_{i=1} \sin(k \pi x_i)(1-x^2_d).
\]

We point out that the parameter $k$ is a frequency that allows us to choose how oscillatory the exact solution $u$ is.
We first tested the method in a two-dimensional domain ($d=2$). Figure \ref{fig:dim2} displays the results we obtained for $k=1$ and by constructing $u_{\vTheta}$ and $\vphi_{\vTheta}$ using neural networks with $15$ sigmoidal activation functions per layer.
 At the end of the stochastic gradient descent algorithm we computed the value $L_N (\vTheta) = 0.0450$. Taking into account the ellipticity of the loss function $\calL$, arguing as in Lemma \ref{lemma:aprox-inf} we deduce 
 \[
 \calL (\vq_m) \simeq \| \vq_m - \vq_0 \|_{H^1(\Omega) \times H(\Div; \Omega)} ,
 \] 
and therefore this quantity serves as an error estimator.

Figure \ref{fig:dim2_k2} corresponds to $k=2$, and we used a similar architecture, but with $18$ sigmoidal activation functions per layer. We observed a fast convergence in the number of iterations, reaching $L_N (\vTheta) = 1.89$ by the end of the minimization algorithm.
Finally, Figure \ref{fig:altadim} reports the results we obtained in case $d = 5$, $k=1$. In this case, we used networks with $25$ sigmoidal activation functions per layer and obtained $L_N (\vTheta) = 2.32$.

\begin{figure}[h]
	\centering
	\begin{tabular}{|c|c|c|c|}
		\hline
		\subf{\includegraphics[width=50mm]{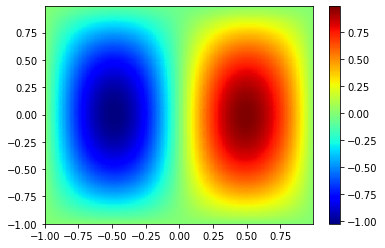}}
		{$ u_{\vTheta}$}
		&
		\subf{\includegraphics[width=50mm]{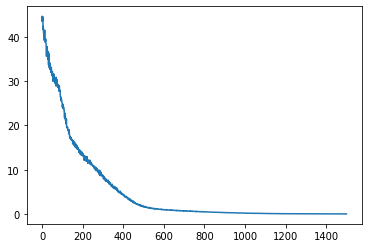}}
        {Loss function vs. iterations.}
		\\
		\hline
		\subf{\includegraphics[width=50mm]{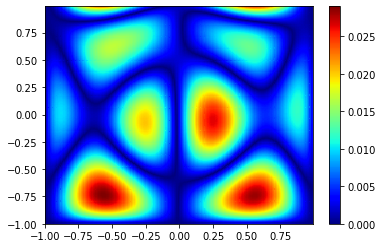}}
		{ $| u_{\vTheta} - u | $}
		&
		\subf{\includegraphics[width=50mm]{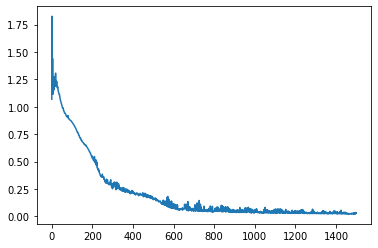}}
		{ $\|u_{\vTheta} - u\|_{L^2(\Omega)}$ vs. iterations.}
		\\
		\hline
	\end{tabular}
	\caption{Top left: computational solution $u_{\vTheta}$ to \eqref{eq:example1} in case $k = 1$ and $d = 2$. In the computation, we used a learning rate $\ell = 0.005$, with 2,000 collocation points, 1,500 optimization steps, and 3603 degrees of freedom (including auxiliary functions). We used one-layer networks both for the main and auxiliary functions. The panel in bottom left exhibits the pointwise discrepancy $|u-u_\vTheta|$. We also report the evolution of the loss function (top right) and the $L^2$ error (bottom right).}
	\label{fig:dim2}
\end{figure}

\begin{figure}[h]
	\centering
	\begin{tabular}{|c|c|c|c|}
		\hline
		\subf{\includegraphics[width=50mm]{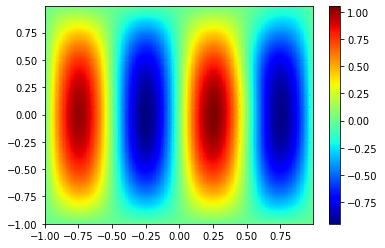}}
		{$ u_{\vTheta}$}
		&
		\subf{\includegraphics[width=50mm]{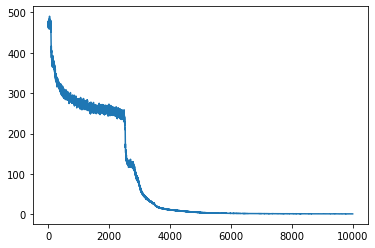}}
        {Loss function vs. iterations.}
		\\
		\hline
		\subf{\includegraphics[width=50mm]{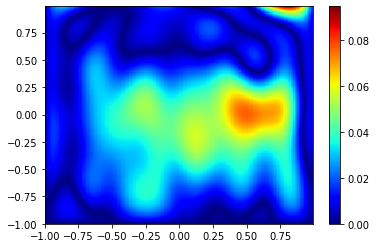}}
		{ $| u_{\vTheta} - u | $}
		&
		\subf{\includegraphics[width=50mm]{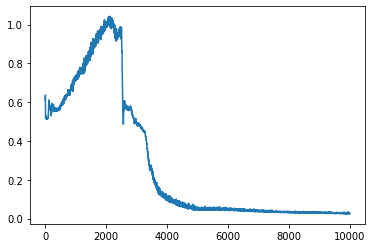}}
		{ $\|u_{\vTheta} - u\|_{L^2(\Omega)}$ vs. iterations.}
		\\
		\hline
	\end{tabular}
	\caption{Computational solution $u_{\vTheta}$ (top left), evolution of the loss function (top right), pointwise error (bottom left), and evolution of the $L^2$-error (bottom right) for \eqref{eq:example1} with $k = 2$ and $d = 2$. We employed five-layer networks for the main functions and three-layer networks for the auxiliary functions. We used an initial learning rate $\ell = 0.005$, with 5,000 collocation points, 10,000 optimization steps, and 2901 degrees of freedom. We halved the learning rate every 2,500 optimization steps.}
	\label{fig:dim2_k2}
\end{figure}

\begin{figure}[h]
	\centering
	\begin{tabular}{|c|c|c|c|}
		\hline
		\subf{\includegraphics[width=50mm]{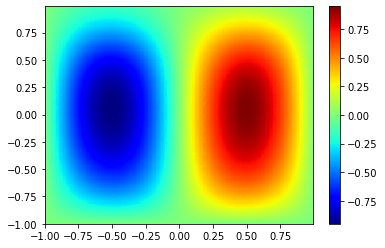}}
		{$ \restr{u_{\vTheta}}{\{x_3,\ldots,x_5 = 0.5\}}$}
		&
		\subf{\includegraphics[width=50mm]{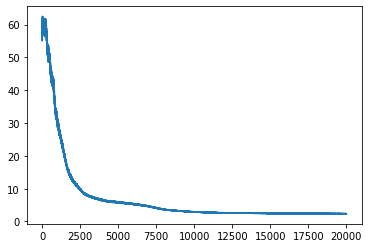}}
        {Loss function vs. iterations.}
		\\
		\hline
		\subf{\includegraphics[width=50mm]{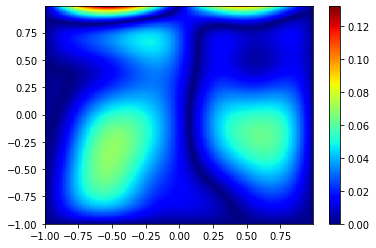}}
		{ $|\restr{u_{\vTheta}}{\{x_3,\ldots,x_5 = 0.5\}} - 
		\restr{u}{\{x_3,\ldots,x_5 = 0.5\}}|$}
		&
		\subf{\includegraphics[width=50mm]{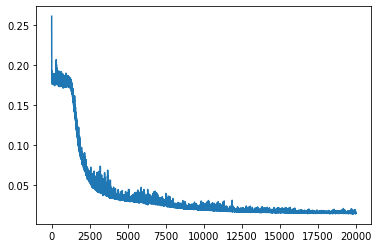}}
		{ MSE vs. iterations.}
		\\
		\hline
	\end{tabular}
	\caption{Slice of the solution $u_{\vTheta}$ (top left), evolution of the loss function (top right), pointwise error (bottom left), and evolution of the Mean Squared Error (MSE) (bottom right) for \eqref{eq:example1} with $k = 1$ and $d = 5$. We employed five-layer networks for the main functions and three-layer networks for the auxiliary functions. We used an initial learning rate $\ell = 0.005$, with 12,000 collocation points, 20,000 optimization steps, and 5656 degrees of freedom. We halved the learning rate every 4,000 optimization steps. We estimated the MSE by using 5,000 random points in $\Omega$ (re-sampled at every step}
	\label{fig:altadim}
\end{figure}
\end{example}

\begin{example}[singularly perturbed problem] \label{ex:singularly-perturbed}
Let $\eps > 0$, $\Omega = (0,1)^2$, $\vb = (-1+2\varepsilon,-1+2\varepsilon)$, $c = 2(1-\varepsilon)$, and the function $f \colon \Omega \to \R$,
\[
f(x,y) = - \left[  x - \left( \frac{1-e^{-x/\eps}}{1-e^{-1/\eps}}\right) + y - \left( \frac{1-e^{-y/\eps}}{1-e^{-1/\eps}}\right) \right]e^{x+y}.
\]
We consider the singularly perturbed problem: find $u \colon \Omega \to \R$ such that
\begin{equation} \label{eq:example2}
\left\lbrace \begin{aligned}
	-\varepsilon \Delta u + \vb\cdot \nabla u +cu = f \quad & \mbox{in } \Omega,\\
	u = 0 \quad & \mbox{on } \partial\Omega.
	\end{aligned} \right. 
\end{equation}
The exact solution to \eqref{eq:example2} is
\[
u(x,y) =  \left( x - \frac{1-e^{-x/\eps}}{1-e^{-1/\eps}}\right) \left( y - \frac{1-e^{-y/\eps}}{1-e^{-1/\eps}}\right) e^{x+y}.
\]
Figure \ref{fig:sing_pert} exhibits our computed solutions for this example with $\eps = 0.05$. In that case, we observed a fast convergence towards the solution, reaching $L_N (\vTheta) = 0.0112$, as well as a good adaptation of the discrete solution to the boundary layers. 

\begin{figure}[h]
	\centering
	\begin{tabular}{|c|c|c|c|}
		\hline
		\subf{\includegraphics[width=45mm]{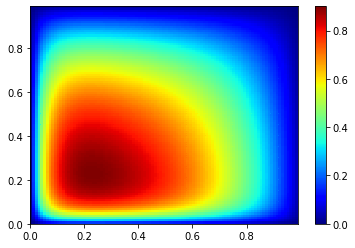}}
		{$u_{\vTheta}(x)$}
		&
		\subf{\includegraphics[width=45mm]{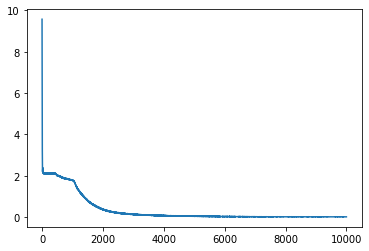}}
        {Loss function vs. iterations.}
		\\
		\hline
		\subf{\includegraphics[width=45mm]{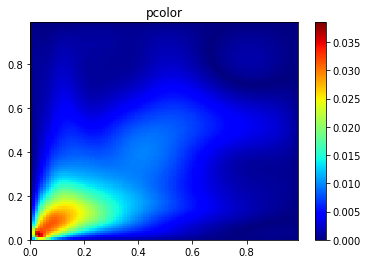}}
		{$|u_{\vTheta}(x) - u(x)|$}
		&
		\subf{\includegraphics[width=45mm]{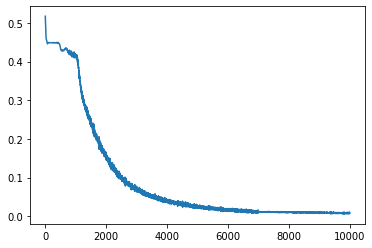}}
		{$\|u_{\vTheta} - u\|_{L^2(\Omega)}$ vs. iterations.}
		\\
		\hline
	\end{tabular}
	\caption{
	Computational solution $u_{\vTheta}$ (top left), evolution of the loss function (top right), pointwise error (bottom left), and evolution of the $L^2$-error (bottom right) for \eqref{eq:example2} with $\eps=0.05$. We used an initial learning rate $\ell = 0.005$, with 5,000 collocation points, 10,000 optimization steps, and 3543 degrees of freedom. Auxiliary functions have been approximated exactly.
	}
	\label{fig:sing_pert}
\end{figure}
\end{example}

\section{Concluding remarks} \label{sec:conclusion}

In this work, we have proposed a First-Order System Least Squares (FOSLS) method based on deep learning for numerically solving second-order elliptic PDEs. This method is meshless, which is naturally advantageous for high-dimensional problems, but as a consequence implies that we cannot compute the loss functions exactly. Taking into account this practical issue, we proved the almost sure convergence of the neural network minimizers towards the PDE solutions. We furthermore extended the theoretical framework to incorporate other methods based on Monte Carlo quadrature.

\begin{remark}[almost-everywhere solutions]\label{rem:regularization}
The convergence proofs in Sections \ref{sec:analysis} and \ref{sec:general} are based on the use of regularized versions of the cost functionals and their discretizations. Regularization consists in restricting the size of the parameters, namely, imposing that $| \vTheta | < R$ for certain $R < \infty$. This ensures that any neural network function with large derivatives is penalized, thereby preventing minimizers from approximating non-smooth functions.

Far from being an artificial condition of the proof, regularization mechanisms of this kind are necessary in the implementation to avoid convergence towards functions that satisfy the PDE almost everywhere but are not weak solutions of the target problem. To illustrate this point, consider the following example, which is just \eqref{eq:FOS} in a simplified setting: seek $u,\phi: (0,1) \to \R$ such that
\begin{equation}\label{eq:remark}
\left\lbrace
\begin{aligned}
\phi - u' & = 0 & \mbox{in } (0,1), \\
\phi'&  = 0 & \mbox{in } (0,1), \\
u(0) & = 0,  \\
u(1) & = 1.
\end{aligned}
\right.
\end{equation}
Naturally, the unique minimizer of the least-squares functional (cf. \eqref{eq:LS-loss})
\[
\calL (u, \phi) := \|  \phi - u' \|_{L^2(\Omega)}^2 + \| \phi' \|_{L^2(\Omega)}^2
\]
in the corresponding admissible set $\calA = \{ (u,\phi) \in [H^1(\Omega)]^2 \colon u(0) = 0, \ u(1) = 1 \}$ is $u(x) = x$ and $\phi(x) = 1$. Let $\delta \in (0,1/2)$ be a small number, and consider the functions
\begin{equation} \label{eq:def-u-phi}
u_\delta(x) = 
\left\lbrace
\begin{aligned}
& 0 & \mbox{in } (0,1/2-\delta) \\
& \frac{x-1/2+\delta}{2\delta} & \mbox{in } (1/2-\delta,1/2+\delta) \\
& 1 & \mbox{in } (1/2+\delta,1)
\end{aligned}
\right.,
\qquad 
\phi_\delta(x) =
\left\lbrace
\begin{aligned}
& 0 & \mbox{in } (0,1/2-\delta) \\
& \frac1{2\delta} & \mbox{in } (1/2-\delta,1/2+\delta) \\
& 0 & \mbox{in } (1/2+\delta,1)
\end{aligned}
\right. .
\end{equation}
We notice $\phi_\delta = u'_\delta$ a.e. in $(0,1)$ and $\calL (u_\delta, \phi_\delta) = 0$, although $(u_\delta, \phi_\delta) \notin \calA$, because $\phi_\delta$ is not an $H^1$ function.

If we utilize the discrete functional \eqref{eq:discrete_cost} with collocation points, and none of these points lies in the interval $(1/2-\delta, 1/2+\delta)$, then for these two functions we would have
\[
\calL_N (u_\delta, \phi_\delta) = 0.
\]
We remark that, independently of the number of collocation points $N$, one can always take $\delta > 0$ sufficiently small such that the probability of none of the sampling points lies in $(1/2-\delta, 1/2+\delta)$ is significant. Therefore, if our neural network is capable of producing functions $(u_{\vTheta}, \phi_{\vTheta})$ approximating $(u_\delta, \phi_\delta)$ in \eqref{eq:def-u-phi} (cf. Figure \ref{fig:remark}), then during the optimization process the descent algorithm may choose to approximate the pair $(u, \phi) = (\chi_{(1/2,1)}, 0)$. This function satisfies the differential equations in \eqref{eq:remark} almost everywhere, but is not a significant solution. The issue of approximating bad solutions of this kind is mitigated by applying classic regularization techniques that penalize large parameters, because $|\vTheta|$ must be large in order to $u_\vTheta '$ be large at some portion of the domain.
\begin{figure}[h]
    \centering
    \includegraphics[scale=0.4]{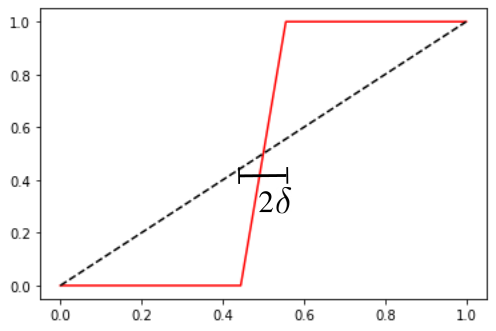}
    \caption{In red the function $u_{\delta}$ defined in \ref{eq:def-u-phi}. On dashed lines the solution of problem \eqref{eq:remark}.}
    \label{fig:remark}
\end{figure}

This difficulty extends to all methods based on the minimization of cost functionals similar to \eqref{eq:discrete_cost}, such as DGM \cite{DGM18} or DRM \cite{E18}. The issue stems from the fact that the functional \eqref{eq:discrete_cost} is unable to distinguish between regular solutions (belonging to a suitable Sobolev space) from any other functions that satisfy the equation almost everywhere. As far as we know, this problem has not been addressed in the literature, and the question of how to develop suitable regularization techniques for these approaches remains open. 
\end{remark}

\begin{remark}[approximation of non-smooth solutions] \label{rem:non-smooth}
There are, however, problems in which the solution presents large gradients in regions of the domain. One can typically think of singularly perturbed problems, such as \eqref{eq:example2}, or singularities arising due to poor boundary regularity, such as for the Poisson problem on an $L$-shaped domain. In those problems, regularization can limit the approximation capabilities of the algorithm. 

For algebraic boundary singularities, if the boundary conditions are imposed in a strong fashion, as discussed in Section \ref{sec:boundary-data}, one could aim to modify the rate at which the corresponding auxiliary function $\dD$ or $\dN$ decreases to zero near the singularity. This could potentially avoid $v_{\vTheta}$ having to approximate a singular function and lead to a faster convergence. Nevertheless, this requires an a priori knowledge about the location and behavior of the singularities of the solution, that is not available in general. 
We emphasize that the theory we developed in Section \ref{sec:general} does not make any regularity assumption on the PDE, and therefore includes the case of non-smooth solutions.
\end{remark}

\section*{Acknowledgements}
The authors thank Prof. Michael Karkulik and Roberto Gonz\'alez for their insightful comments on a previous version of this manuscript.

\bibliographystyle{abbrv}
\bibliography{FOSLS.bib}
\end{document}